\def\0{\emptyset}
\def\n{\noindent}
\newtheorem{theorem}{Theorem}
\newtheorem{lemma}[theorem]{Lemma}
\newtheorem{corollary}[theorem]{Corollary}
\newtheorem{claim}[theorem]{Claim}
\newtheorem{definition}[theorem]{Definition}
\def\eps{\varepsilon}
\begin{document}
\title{Vertex degree sums for matchings in 3-uniform hypergraphs }
\thanks{
Yi Zhang and Mei Lu are supported by the National Natural Science Foundation of China (Grant 11771247).
Yi Zhao is partially supported by NSF grant DMS-1700622.}

\author{Yi Zhang}
\address{ School of Science, Beijing University of Posts and Telecommunications, Beijing, 100876}
\email{shouwangmm@sina.com}

\author{Yi Zhao}
\address
{Department of Mathematics and Statistics, Georgia State University, Atlanta, GA 30303}
\email{yzhao6@gsu.edu}

\author{Mei Lu}
\address{Department of Mathematical
Sciences, Tsinghua University, Beijing, 100084}
\email{lumei@tsinghua.edu.cn}

\date{\today}

\keywords{Matchings; Uniform hypergraphs; Dirac's theorem; Ore's condition}

\begin{abstract}
Let $n, s$ be positive integers such that $n$ is sufficiently large and $s\le n/3$. Suppose $H$ is a 3-uniform hypergraph of order $n$. If $H$ contains no isolated vertex and $\deg(u)+\deg(v) > 2(s-1)(n-1)$ for any two vertices $u$ and $v$ that are contained in some edge of $H$, then $H$ contains a matching of size $s$. This degree sum condition is best possible and confirms a conjecture of the authors [Electron. J. Combin. 25 (3), 2018], who proved the case when $s= n/3$.
\end{abstract}

\maketitle

\section{Introduction}

A \emph{$k$-uniform hypergraph} $H$ (in short, \emph{$k$-graph}) is a pair $(V,E)$, where $V$ is a finite set
of vertices and $E$ is a family of $k$-element subsets of $V$. Note that a 2-graph is simply a graph.
Let $V(H)$ and $E(H)$ denote the vertex set and edge set of $H$, respectively.
A \emph{matching of size $s$} in $H$ is a family of $s$ pairwise disjoint edges of $H$. If the matching covers all the vertices of $H$, then we call it a \emph{perfect matching}.
Given a set $S \subseteq V$, the \emph{degree} $\deg_{H}(S)$ of $S$ is the number of the edges of $H$ containing $S$. We simply write $\deg(S)$ when $H$ is obvious from the context. Further, let $\delta_\ell(H)= \min\{\deg(S): S\subseteq V(H),|S|=\ell\}$.

Given integers $\ell<k\le n$ such that $k$ divides $n$, let $m_\ell(k,n)$ denote the smallest integer $m$ such that every $k$-graph $H$ on $n$ vertices with $\delta_\ell(H) \geq m$ contains a perfect matching. In recent years the problem of determining $m_\ell(k,n)$ has received much attention (see \cite{Alon,Han,Han3,Kha1,Kha2,Kuhn1,Kuhn2,Mar,Pik,Rod1,Rod2,Rod3,TrZh12, TrZh13, TrZh16}). In particular, R\"{o}dl,  Ruci\'{n}ski and Szemer\'{e}di \cite{Rod3} determined $m_{k-1}(k, n)$ for all $k\ge 3$ and sufficiently large $n$.
Treglown and Zhao \cite{TrZh12, TrZh13} determined $m_\ell(k,n)$ for all $\ell \geq k/2$ and sufficiently large $n$.
More Dirac-type results on hypergraphs can be found in surveys \cite{RoRu-s, Zhao}.

A well-known result of Ore \cite{Ore} extended Dirac's theorem by determining the smallest degree sum of two non-adjacent vertices that guarantees a Hamilton cycle in graphs. Ore-type problems for hypergraphs have been studied recently. For example, Tang and Yan \cite{Tang} studied the degree sum of two $(k-1)$-sets that guarantees a tight Hamilton cycle in $k$-graphs. Zhang and Lu \cite{Yi1} studied the degree sum of two $(k-1)$-sets that guarantees a perfect matching in $k$-graphs. Zhang, Zhao and Lu \cite{zhang} determined the minimum degree sum of two adjacent vertices that guarantees a perfect matching in 3-graphs without isolated vertices, see Theorem~\ref{theoremb1} (two vertices in a hypergraph are \emph{adjacent} if  there exists an edge containing both of them). Note that one may study the minimum degree sum of two arbitrary vertices and that of two non-adjacent vertices that guarantees a perfect matching instead. In fact, it was mentioned in \cite{zhang} that the former equals to $2 m_1(3, n)-1$ while the latter does not exist.

Let us define (potential) extremal 3-graphs for the matching problem. For $1\le \ell\le 3$, let $H^{\ell}_{n, s}$ denote the 3-graph of order $n$, whose vertex set is partitioned into two sets $S$ and $T$ of size $n- s\ell+1$ and $s\ell -1$, respectively, and whose edge set consists of all triples with at least $\ell$ vertices in $T$. A well-known conjecture of Erd\H{o}s ~\cite{Erd65}, recently verified for $3$-graphs \cite{Fra, LuMi} , implies that $H_{n,s}^1$  or $H_{n,s}^3$  is the densest $3$-graph on $n$ vertices not containing a matching of size $s$. On the other hand, K\"{u}hn, Osthus and Treglown \cite{Kuhn2} showed that for sufficiently large $n$, $H_{n,s}^1$ has the largest minimum vertex degree among all 3-graphs on $n$ vertices not containing a matching of size $s$.

\begin{theorem}\cite{Kuhn2}\label{Kuhn2}
There exists $n_0 \in \mathbb{N}$ such that if $H$ is a $3$-graph of order $n \geq n_0$ with $\delta_1(H) > \delta_1(H^1_{n, s})= \binom{n-1}{2}-\binom{n-s}{2}$ and $n \geq 3s$, then $H$ contains a matching of size $s$.
\end{theorem}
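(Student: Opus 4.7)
The plan is to use the absorbing method paired with a stability dichotomy, in the spirit of the proofs of the corresponding perfect-matching results. Write $d:=\binom{n-1}{2}-\binom{n-s}{2}$, so the hypothesis reads $\deg_H(v)\ge d+1$ for every $v\in V(H)$. Fix a small $\eps>0$ and split the analysis into two cases: either $H$ is $\eps$-close to the extremal $3$-graph $H^1_{n,s}$ (there exists $T\sb V(H)$ with $|T|=s-1$ such that all but at most $\eps n^3$ edges of $H$ meet $T$), or $H$ is $\eps$-far from $H^1_{n,s}$.

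In the non-extremal case, I would first construct an absorbing matching $M_a$ of size at most $\alpha n$, for some $\alpha\ll\eps$, with the property that for every $L\sb V(H)\se V(M_a)$ with $|L|\le\beta n$ and $3\mid|L|$, the hypergraph $H[V(M_a)\cup L]$ contains a perfect matching. The construction is standard: for each potential ``hole'' $x$ of size $3$, count the absorbers (small matchings that can be rearranged to cover $x$); the $\eps$-far assumption combined with the degree bound yields a supersaturation estimate on absorbers, and a random selection with Chernoff bounds plus alteration produces $M_a$. After removing $V(M_a)$, the residual $3$-graph still has high minimum vertex degree, and a random-greedy covering produces a near-perfect matching on what remains; the absorber then swallows the leftover vertices to give a matching of size $s$.

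The extremal case is handled by a direct combinatorial argument. Suppose $V(H)=T\cup S$ with $|T|=s-1$, $|S|=n-s+1$, and $S$ nearly independent. Any matching using only edges that meet $T$ has size at most $|T|=s-1$, so to reach size $s$ one must produce an edge of $H$ entirely inside $S$ and combine it with a matching of size $s-1$ meeting $T$ disjointly. A direct count shows that a vertex $v\in S$ whose every edge meets $T$ contributes exactly $\binom{s-1}{2}+(s-1)(n-s)=d$ to its degree, so the strict bound $\delta_1(H)>d$ forces every vertex of $S$ to lie in at least one edge inside $S$. A careful exchange argument, starting from a maximum matching $M$ meeting $T$ and augmenting along an appropriately chosen $S$-internal edge, then builds the required matching of size $s$.

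The main obstacle is the extremal case, because the inequality $\delta_1(H)>\delta_1(H^1_{n,s})$ is tight precisely on $H^1_{n,s}$ and the strict excess must be exploited with surgical care. One must classify every way in which the near-$H^1_{n,s}$ structure can fail and show that each such failure yields an augmenting configuration extending a matching of size $s-1$ to one of size $s$. Controlling the interaction between the atypical edges inside $S$ and a fixed maximum matching meeting $T$, while managing error terms coming from the $\eps$-closeness and from the fact that $T$ in $H$ may not coincide with the ``true'' set of $s-1$ high-degree vertices, is the technical heart of the argument.
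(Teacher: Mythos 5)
First, a clarifying note: the paper you are working from does not prove this statement. It is quoted from K\"uhn, Osthus and Treglown \cite{Kuhn2} and used as a black box (in Case 2 of the proof of Theorem~\ref{the2}). So there is no ``paper's own proof'' here to compare against; I will assess your proposal on its own terms.

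Your high-level stability dichotomy is reasonable, but there is a genuine gap in the non-extremal case. The absorbing method, as you have set it up (absorbing matching $M_a$, random-greedy near-cover of the residual graph, absorb the leftover), is tailored to \emph{perfect} matchings. When $s<n/3$ the notion of ``leftover'' is no longer $o(n)$ vertices: after greedily choosing $s-|M_a|$ disjoint edges you already have a matching of size $s$ and the absorber does nothing, and if the greedy phase stalls before reaching $s-|M_a|$ edges the residual uncovered set has size comparable to $n$, which is far outside the $\beta n$ range the absorber handles. The way absorption actually enters this problem (and this is how it is used in the cited source and in the present paper's own reduction in Case 1 of Theorem~\ref{the2}) is by first reducing the $s$-matching problem, for $s$ close to $n/3$, to a \emph{perfect} matching problem on a $3s$-vertex subhypergraph obtained by deleting roughly $n-3s$ vertices, and then invoking the perfect-matching threshold. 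For $s$ bounded away from $n/3$ a simpler, absorption-free argument suffices; your write-up neither makes this split nor explains how to verify that the vertex-degree condition survives the deletion, which is the delicate point in the reduction.

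The extremal case is computationally on target---you correctly note that a vertex of $S$ all of whose edges meet a fixed $(s-1)$-set $T$ has degree at most $\binom{s-1}{2}+(s-1)(n-s)=\binom{n-1}{2}-\binom{n-s}{2}$, so the strict degree bound forces an $S$-internal edge at every vertex of $S$. But ``a careful exchange argument then builds the required matching of size $s$'' is precisely the hard part, and as written it does not handle: (a) the fact that $T$ in the $\eps$-close regime is only approximate, so $S$ may contain up to $\eps n^3$ internal edges distributed adversarially rather than a clean independent set; (b) the need to choose an $S$-internal edge disjoint from \emph{some} $(s-1)$-matching that meets $T$, which is not automatic since a fixed maximum matching may block all available $S$-internal edges; and (c) the error propagation when the true high-degree set and your candidate $T$ disagree on $\Theta(\eps n)$ vertices. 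Your last paragraph acknowledges these obstacles but does not resolve them, so as it stands the extremal case is a plan rather than a proof.
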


Given a 3-graph $H$, let $\sigma_2(H)$ denote the minimum $\deg(u)+\deg(v)$ among all adjacent vertices $u$ and $v$. It is easy to see that
\begin{align*}
\sigma_2(H_{n,s}^3) &= 2 \binom{3s-2}{2}, \quad
\sigma_2(H_{n,s}^1) =2\left(\binom{n-1}{2}-\binom{n-s}{2} \right), \ \text{and} \\
\sigma_2(H_{n,s}^2) & =\binom{2s-2}{2}+\left(n-2s+1\right)\binom{2s-2}{1}+\binom{2s-1}{2} = (2s-2)(n-1).
\end{align*}

The following is \cite[Theorem 1]{zhang}, which implies that, when $n$ is divisible by 3 and sufficiently large, $H^2_{n, n/3}$ has the largest $\sigma_2(H)$ among all $n$-vertex 3-graphs $H$ containing no isolated vertex or perfect matching.

\begin{theorem}\label{theoremb1}\cite{zhang}
There exists $n_0 \in \mathbb{N}$ such that the following holds for all integers $n\ge n_0$ that are divisible by $3$. Let $H$ be a $3$-graph of order $n$ without isolated vertex. If $\sigma_2(H) > \sigma_2(H^2_{n,n/3})= \frac{2}{3}n^2-\frac{8}{3}n+2$, then $H$ contains a perfect matching.
\end{theorem}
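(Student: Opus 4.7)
The plan is to attack Theorem~\ref{theoremb1} by combining the absorbing method with a stability/extremal dichotomy, using Theorem~\ref{Kuhn2} as a black box whenever the minimum vertex degree is large. Set $s = n/3$. If $\delta_1(H) > \delta_1(H^1_{n,s}) = \binom{n-1}{2}-\binom{2n/3}{2}$, Theorem~\ref{Kuhn2} delivers a perfect matching immediately, so I may assume some vertex $v_0$ satisfies $\deg(v_0) \le \delta_1(H^1_{n,s})$. Since $H$ has no isolated vertex, $v_0$ is adjacent to some $u$, and the Ore-type bound $\sigma_2(H) > \frac{2}{3}n^2 - \frac{8}{3}n + 2$ then forces $\deg(u)$ to be very close to the maximum value $\binom{n-1}{2}$. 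Applying this observation at every low-degree vertex shows that the set of light vertices is nearly independent in the adjacency graph of $H$, and that each of its neighbours has near-maximum degree; this rigid structure is what makes the dichotomy below tractable.

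Fix a small $\eps > 0$ and split on whether $H$ is $\eps$-close to $H^2_{n,s}$, meaning that there is a partition $V(H) = S \cup T$ with $|S| = s+1$, $|T| = 2s-1$, such that at most $\eps n^3$ edges of $H$ meet $S$ in two or more vertices. In the \emph{non-extremal} case, I would follow the standard absorbing recipe. (i) Prove an absorbing lemma: there exists a matching $M_0$ of size $o(n)$ such that for any $W \subseteq V(H) \setminus V(M_0)$ with $3 \mid |W|$ and $|W|$ sufficiently small, $V(M_0) \cup W$ spans a perfect matching of $H$; the main ingredient is a supersaturation statement that every triple of vertices has many absorbers, derived from the degree-sum hypothesis together with non-extremality. (ii) Produce a near-perfect matching in $H - V(M_0)$ covering all but $o(n)$ vertices via a fractional-matching / weak-regularity argument, again using non-extremality to avoid the $H^2_{n,s}$ barrier. (iii) Use $M_0$ to absorb the leftover into a perfect matching.

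In the \emph{extremal} case, up to $\eps n^3$ edges of $H$ behave like $H^2_{n,s}$: almost every edge has at most one vertex in $S$, and $|S| = s+1 > s$. Consequently any perfect matching must contain at least one ``crossing'' edge $e$ with two or three vertices in $S$, together with a matching covering $S \setminus V(e)$ by edges each using exactly one $S$-vertex. Producing $e$ is where the strict inequality is invoked: the $S$-vertices of $H^2_{n,s}$ have small degree $\binom{2s-1}{2}$ and are pairwise non-adjacent, so introducing only a few crossing edges would create an adjacent pair $\{a,b\} \subset S$ with $\deg(a) + \deg(b) < \sigma_2(H^2_{n,s})$, contradicting the hypothesis. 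A careful accounting then extracts a usable $e$, and a Hall-type argument on the bipartite incidence structure between $S \setminus V(e)$ and unordered pairs in $T \setminus V(e)$ completes the perfect matching.

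The main obstacle will be the extremal case. Because $H^2_{n,s}$ itself \emph{saturates} the value $\sigma_2 = \frac{2}{3}n^2 - \frac{8}{3}n + 2$, the hypothesis beats the extremal bound by only a single unit, and every structural step must squeeze useful information from that sliver of slack. This demands a delicate case analysis according to which vertices of the putative partition $S \cup T$ carry anomalous degree, and the argument leans essentially on the Ore-type character of the hypothesis---restricted to \emph{adjacent} pairs---rather than on any minimum-degree assumption.
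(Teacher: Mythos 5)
The paper does not actually prove Theorem~\ref{theoremb1}; it is imported verbatim as \cite[Theorem~1]{zhang}, and what this paper uses of it is really its stability form, Theorem~\ref{theoremb6}. The only hint given here about the referenced proof is the remark that Theorem~\ref{the2} ``follow[s] the same approach as in \cite{zhang}: using the condition on $\sigma_2(H)$, we greedily extend a matching of $H$ until it has $s$ edges,'' i.e.\ a combinatorial matching-extension argument organized around the partition $V=U\cup W$ into heavy and light vertices, with a preliminary matching that first covers all light vertices (this is the role played by Lemma~\ref{lemma7}). That is a genuinely different engine from the absorbing-plus-regularity scheme you propose.

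Your plan also has a concrete gap. The absorbing lemma in step~(i) requires every triple of vertices to lie in $\Omega(n^3)$ absorbers, but the hypothesis $\sigma_2(H)>\sigma_2(H^2_{n,n/3})$ constrains only \emph{adjacent} pairs: an individual non-isolated vertex $v\in W$ can have degree as low as $1$, in which case triples through $v$ have essentially no absorbers, and no amount of non-extremality rescues this. This is exactly why Ore-type degree sums are harder to exploit than a minimum-degree assumption, and why the paper's method insists on packing every light vertex into a small matching (Lemma~\ref{lemma7}) before doing anything iterative. A second issue is the extremal case: your observation that a crossing edge creates an adjacent pair inside $S$, one of which must then have anomalously large degree, is a sound place to start mining structure, but the ``Hall-type argument'' matching $S\setminus V(e)$ into disjoint pairs in $T$ from $\eps$-closeness plus a one-unit surplus is precisely the delicate step, and you leave it unresolved. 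Your opening observation --- that the light vertices form a near-independent set whose neighbours are all heavy --- is correct and is in fact the starting point of the referenced proof (it is the $U$/$W$ dichotomy), but the machinery you propose on top of it does not close as written.
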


Zhang, Zhao and Lu \cite[Conjecture 12]{zhang} further conjectured that for sufficiently large $n$ and any $s< n/3$, $H^2_{n, s}$ has the largest $\sigma_2(H)$ among all $n$-vertex 3-graphs $H$ containing no isolated vertex or matching of size $s$. In this paper we verify this conjecture.

\begin{theorem}
\label{the1}
There exists $n_1 \in \mathbb{N}$ such that the following holds for all integers $n\ge n_1$ and $s\le n/3$. If $H$ is a $3$-graph of order $n$ without isolated vertex and $\sigma_2(H) > \sigma_2(H_{n,s}^2)= 2(s-1)(n-1)$, then $H$ contains a matching of size $s$.
\end{theorem}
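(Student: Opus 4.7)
The plan is to prove Theorem~\ref{the1} by contradiction, assuming $H$ satisfies the hypotheses but contains no matching of size $s$, and following the extremal versus non-extremal paradigm that was successfully employed for the $s=n/3$ case in~\cite{zhang}.

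First, I would apply Theorem~\ref{Kuhn2} to reduce the problem: if $\delta_1(H) > \binom{n-1}{2} - \binom{n-s}{2}$ then $H$ already contains a matching of size $s$, so I may assume there exists a vertex $v_0$ with $\deg(v_0) \le \binom{n-1}{2} - \binom{n-s}{2} = (s-1)(2n-s-2)/2$. Since $H$ has no isolated vertex, $v_0$ lies in some edge, and the Ore-type condition forces every vertex adjacent to $v_0$ to have degree strictly greater than $(s-1)(2n+s-2)/2$. Defining $L := \{v \in V(H) : \deg(v) \le (s-1)(2n-s-2)/2\}$, a similar use of $\sigma_2(H) > 2(s-1)(n-1)$ shows that no edge of $H$ contains two vertices of $L$ (their degree sum would be at most $(s-1)(2n-s-2) < 2(s-1)(n-1)$). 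Thus every edge has at most one vertex in $L$, and the set $T := V(H) \setminus L$ carries the bulk of the degree and essentially all pairwise co-density.

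Next, I would split into two cases based on $|T|$ (equivalently, on the proximity of $H$ to $H^2_{n,s}$). In the non-extremal case---when $|T|$ substantially exceeds $2s-1$ or edges are sufficiently spread out---I would find a matching of size $s$ by a greedy selection inside $T$ or an absorbing argument, using the large vertex degrees to keep the selection feasible. In the extremal case---when $|T|$ is close to $2s-1$ and most edges have at least two vertices in $T$, mimicking $H^2_{n,s}$---I would exploit the \emph{strict} inequality $\sigma_2(H) > \sigma_2(H^2_{n,s}) = 2(s-1)(n-1)$ to deduce the existence of an ``extra'' edge $e^*$ with $|e^*\cap T| \le 1$. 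Such an edge, combined with $s-1$ disjoint edges each having two vertices in $T$, uses at most $|e^* \cap T| + 2(s-1) \le 2s-1 \le |T|$ vertices of $T$, and therefore together with $e^*$ forms a matching of size $s$.

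The hardest step will be the extremal case: one must first pin down $T$ (probably via a refined degree threshold or an iterative refinement), and then leverage the strict inequality in $\sigma_2(H) > 2(s-1)(n-1)$ to extract both the extra edge $e^*$ and enough codegree structure inside $T$ to secure the $s-1$ supporting disjoint edges. Boundary situations---for instance when $|T| = 2s-1$ exactly and $|e^*\cap T| = 1$, so the $s-1$ supporting edges must consume precisely the remaining $2(s-1)$ vertices of $T$---will require a delicate combinatorial argument that combines the no-isolated-vertex hypothesis with near-completeness of $H[T]$. A secondary challenge is to make the non-extremal argument uniform over the entire range $s \le n/3$, covering both the small-$s$ regime (where $n-3s$ is large, giving ample slack) and the regime in which $s$ approaches $n/3$ and one may need to dovetail with the techniques behind Theorem~\ref{theoremb1}.
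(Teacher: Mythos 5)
Your high-level framing — partition the vertex set by degree, use the Ore-type hypothesis to show the low-degree class is independent (in the hypergraph sense), then split into an extremal case ($H$ close to $H^2_{n,s}$) and a non-extremal case — is in the same spirit as the paper, which proves a stability theorem (Theorem~\ref{the2}) and then derives Theorem~\ref{the1} from it. Your set $L$ plays the role of the paper's $W$, and your observation that no edge contains two vertices of $L$ is correct and parallels the paper's observation about $W$. That is where the agreement ends, and there is a concrete logical gap in your extremal case.

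You show that every edge of $H$ has at most one vertex in $L$, i.e., at least two vertices in $T=V\setminus L$. Then in the extremal case you propose to ``deduce the existence of an extra edge $e^*$ with $|e^*\cap T|\le 1$.'' But such an edge would have at least two vertices in $L$, which you have just ruled out; no such $e^*$ exists under your own definitions. The remark about an ``iterative refinement'' of $T$ does not repair this: if you instead fix a set $T'$ of size exactly $2s-1$ (independent of the degree threshold), you can indeed use the strict inequality $\sigma_2(H)>\sigma_2(H^2_{n,s})$ to show some edge has at most one vertex in $T'$, but then your argument that $s-1$ further disjoint edges each meeting $T'$ in exactly two vertices can be found (disjointly from $e^*$ and from one another, exhausting $T'$) is precisely the hard combinatorial content of the problem, not an afterthought; the proposal offers no mechanism for it. In the paper the strict inequality is used only at the very end and only to rule out $H\subseteq H^2_{n,s}$; the heavy lifting is done inside the stability theorem, where the strict inequality is \emph{not} available (only $\sigma_2(H)>2sn-\varepsilon n^2$), so the extremal structure must be handled directly by the matching-extension machinery.

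Two further gaps. First, the non-extremal case (``greedy selection inside $T$ or an absorbing argument'') elides all the actual work: the paper needs Lemma~\ref{lemma7} to produce an initial matching covering exactly one $W$-vertex per edge, a delicate notion of ``optimal'' matching (Definition~\ref{def:M}) with a lower bound on the number of $W$-covering edges, and then a case split on whether the matching sits at the threshold, one branch of which passes through the perfect-matching Theorem~\ref{theoremb6} when $s$ is near $n/3$; none of this is captured by a greedy or absorbing heuristic, and a straightforward greedy argument along the degree bound you use will stall well before $s$ edges when $s$ is of order $n$. Second, the paper does not make its argument uniform in $s$: it invokes the external Theorem~\ref{Yi1} (from \cite{Yi3}) to handle $s\le n/13$ and only runs the stability argument for $s>\tau n$; your plan to make a single argument work for all $s\le n/3$ is a genuinely harder task and the proposal does not indicate how the small-$s$ regime would be handled without an analogous external result.
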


Since two theorems have different extremal hypergraphs, Theorem~\ref{the1} does not imply Theorem~\ref{Kuhn2} (analogously Theorem~\ref{Kuhn2} does not imply Erd\H os' matching conjecture for 3-graphs). On the other hand, one may wonder why we assume that $H$ contains no isolated vertex in Theorem~\ref{the1} (especially when  $s< n/3$). In fact, as shown in the concluding remarks of \cite{zhang}, Theorem~\ref{the1} implies another conjecture \cite[Conjecture 13]{zhang}, which determines the largest $\sigma_2(H)$ among all 3-graphs containing no matching of size $s$. Note that  $\sigma_2(H_{n,s}^2) \geq \sigma_2(H_{n,s}^3)$ if and only if $s \le (2n+4)/9$.

\begin{corollary}
There exists $n_2 \in \mathbb{N}$ such that the following holds. Suppose that $H$ is a $3$-graph of order $n \geq n_2$ and $2\le s\le n/3$. If $\sigma_2(H) > \sigma_2(H_{n,s}^2)$ and $s \leq (2n+4)/9$ or $\sigma_2(H) > \sigma_2(H_{n,s}^3)$ and $s > (2n+4)/9$, then $H$ contains a matching of size $s$.
\end{corollary}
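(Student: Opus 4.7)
The plan is to deduce the corollary from Theorem~\ref{the1} by reducing to the case of a hypergraph without isolated vertices. Let $H'$ be the subhypergraph obtained from $H$ by deleting every isolated vertex, and set $n' = |V(H')|$. Isolated vertices are never adjacent to anything, so $\sigma_2(H') = \sigma_2(H)$, and $H$ contains a matching of size $s$ if and only if $H'$ does; I may also assume $H$ has at least one edge, since otherwise $\sigma_2(H)$ is undefined. It therefore suffices to verify the three hypotheses of Theorem~\ref{the1} for $H'$: that $n' \ge n_1$, that $s \le n'/3$, and that $\sigma_2(H') > 2(s-1)(n'-1)$.

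The key algebraic observation is that $\binom{3s-2}{2} = 3(s-1)(3s-2)/2$, from which $s > (2n+4)/9$ is equivalent to $\binom{3s-2}{2} > (s-1)(n-1)$. Consequently, in Case~1 the hypothesis already reads $\sigma_2(H) > 2(s-1)(n-1)$, while in Case~2 it upgrades to $\sigma_2(H) > 2\binom{3s-2}{2} > 2(s-1)(n-1)$. Since $n' \le n$, in both cases I obtain $\sigma_2(H') > 2(s-1)(n'-1)$, the required Ore-type bound.

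Next I check $n' \ge 3s$ and $n' \ge n_1$. The maximum degree of a vertex in $H'$ is $\binom{n'-1}{2}$, so $\sigma_2(H') \le 2\binom{n'-1}{2}$. If $n' \le 3s-1$, this gives $\sigma_2(H') \le 2\binom{3s-2}{2}$, which contradicts the Case~2 hypothesis directly and, via the algebraic equivalence above, also contradicts the Case~1 hypothesis (it would force $n < (9s-4)/2$, excluded by $s \le (2n+4)/9$). If instead $n' < n_1$, then $\sigma_2(H') \le 2\binom{n_1-2}{2}$, an absolute constant, while $s \ge 2$ forces $\sigma_2(H') > 2(s-1)(n-1) \ge 2(n-1)$; choosing $n_2 > \binom{n_1-2}{2}$ rules this out.

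With all three hypotheses verified, Theorem~\ref{the1} applied to $H'$ with parameters $(n', s)$ yields a matching of size $s$ in $H'$, hence in $H$. The whole argument is essentially quantifier bookkeeping on top of Theorem~\ref{the1}, with no substantial combinatorial obstacle; the only delicate points are the small algebraic identity that unifies the two cases and the choice of $n_2$ large enough to absorb hypergraphs in which many vertices are isolated (recall that the extremal hypergraph $H_{n,s}^3$ in Case~2 itself has $n-3s+1$ isolated vertices, which is precisely why Theorem~\ref{the1} cannot be applied to $H$ directly).
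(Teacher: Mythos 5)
Your proof is correct: the paper itself does not spell out an argument for the corollary (it defers to the concluding remarks of the cited earlier paper [zhang]), but the reduction you give—delete isolated vertices to form $H'$, observe $\sigma_2(H')=\sigma_2(H)$ and that matchings transfer, use the identity $\binom{3s-2}{2}=\tfrac{3}{2}(s-1)(3s-2)$ to see both hypotheses imply $\sigma_2(H)>2(s-1)(n-1)$, and then bound $n'\ge 3s$ and $n'\ge n_1$ via $\sigma_2(H')\le 2\binom{n'-1}{2}$—is exactly the intended deduction from Theorem~\ref{the1} and is sound. The only caveats, which you already flag, are degenerate ones (edgeless $H$, the off-by-one in the choice of $n_2$) and do not affect the argument.
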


Let us explain our approach towards Theorem~\ref{the1}. The case when $s\le n/13$ was already solved by Zhang and Lu \cite{Yi2} in a stronger form. Note that $\sigma_2(H_{n,s}^2) > \sigma_2(H_{n,s}^1)$.
The following theorem shows that, when $n\ge 13s$, not only $H_{n,s}^2$ is the (unique) 3-graph with the largest $\sigma_2(H)$ among all $H$ containing no isolated vertex or a matching of size $s$, but also $H^1_{n, s}$ is the sub-extremal 3-graph for this problem. (In fact, Zhang and Lu~\cite{Yi2} conjectured that Theorem~\ref{Yi1} holds for \emph{all} $n\ge 3s$. If true, this strengthens Theorem~\ref{Kuhn2} and actually provides a link between Ore's and Dirac's problems.)

\begin{theorem}\cite{Yi3}\label{Yi1}
Let $n, s$ be positive integers and $H$ be a 3-graph of order $n \geq 13s$ without isolated vertex. If $\sigma_2(H)  > \sigma_2(H^1_{n, s}) = 2\left( \binom{n-1}{2} - \binom{n-s}{2} \right)$, then either $H$ contains a matching of size $s$ or $H$ is a subgraph of $H_{n,s}^2$.
\end{theorem}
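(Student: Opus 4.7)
The plan is to use the $\sigma_2$-hypothesis to partition $V(H)$ by degree, mirroring the structure of the extremal $H^2_{n,s}$, and then reduce the matching problem to Theorem~\ref{Kuhn2}. Set
\[
A := \{v\in V(H) : \deg_H(v) > \binom{n-1}{2}-\binom{n-s}{2}\}, \qquad B := V(H)\setminus A.
\]
The first and easiest observation is that the $\sigma_2$-hypothesis forbids two vertices of $B$ from sharing an edge (their degree sum being at most $2(\binom{n-1}{2}-\binom{n-s}{2})$), so every edge of $H$ has at most one vertex in $B$. If $|A|\le 2s-1$, augment $A$ arbitrarily with vertices of $B$ to obtain a set $T$ of size exactly $2s-1$; each edge still has at least two vertices in $T$, so $H\subseteq H^2_{n,s}$ and we are done. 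Thus we may assume $|A|\ge 2s$, and the remaining task is to contradict the standing hypothesis by producing a matching of size $s$ in $H$.

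The case $B=\emptyset$ is immediate, because then $\delta_1(H) > \binom{n-1}{2}-\binom{n-s}{2}$, and Theorem~\ref{Kuhn2} applies directly (the assumption $n\ge 13s$ supplies the ``$n$ sufficiently large'' clause of that theorem, any small-$s$ exceptions being handled by a direct count). Otherwise $B\neq\emptyset$ and $|A|\ge 2s$, and we split further on whether $|A|\ge 3s$. When $|A|\ge 3s$, the plan is to restrict to the induced sub-hypergraph $H[A]$ and invoke Theorem~\ref{Kuhn2} on it. Writing $e_2(v) := |\{e\in E(H): v\in e,\, e\cap B\neq\emptyset\}|$ for $v\in A$, we have $\deg_{H[A]}(v) = \deg_H(v) - e_2(v)$, and the double count
\[
\sum_{v\in A} e_2(v) = 2\sum_{b\in B} \deg_H(b) \le 2|B|\left(\binom{n-1}{2}-\binom{n-s}{2}\right)
\]
controls the aggregate loss from $\deg_H$ to $\deg_{H[A]}$. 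After carefully pruning vertices of $A$ with atypically large $e_2$, one obtains $A'\subseteq A$ of size at least $3s$ with $\delta_1(H[A']) > \binom{|A'|-1}{2} - \binom{|A'|-s}{2}$, and Theorem~\ref{Kuhn2} applied to $H[A']$ yields the matching.

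The delicate regime is $2s\le |A|\le 3s-1$, where no matching of size $s$ can fit entirely inside $H[A]$, so some edges of the matching must cross to $B$. Here each $v\in A$ satisfies $e_2(v)\ge \deg_H(v) - \binom{|A|-1}{2}$, and this quantity is of order $sn$ under $n\ge 13s$, so every vertex of $A$ participates in many crossing edges to $B$. The matching is constructed in two stages: first select $k\le \lfloor(|A|-s)/2\rfloor$ pairwise disjoint edges inside $A$, then pair up the remaining $|A|-3k$ vertices of $A$ and assign each pair a distinct partner in $B$ via a Hall-type rainbow-matching argument on the bipartite graph between $A$-pairs and $B$. The main technical obstacle is the verification of Hall's condition in this second stage in a way compatible with the choices made in the first stage; the linear slack $n\ge 13s$ (rather than the weaker $n\ge 3s$) is precisely the quantitative room that makes the required codegree estimates go through.
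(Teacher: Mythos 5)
You are proving a statement that this paper only cites (from [Yi3]) and never proves, so there is no in-paper argument to compare against; judged on its own terms, your outline opens correctly but fails at both of its main steps. The first move is fine and standard: no two vertices of $B$ are adjacent, and if $|A|\le 2s-1$ then $H\subseteq H^2_{n,s}$ (this is the same $U$/$W$ split the paper uses for Theorem~\ref{the2}). The first genuine gap is your case $|A|\ge 3s$: the claimed pruning lemma is false. Take $s\ge 2$, $|A|=3s$, $|B|=n-3s$, and let $E(H)$ consist of all triples with exactly two vertices in $A$ and one in $B$. For $n\ge 13s$ one checks $\deg(a)=(3s-1)(n-3s)>\binom{n-1}{2}-\binom{n-s}{2}$ for every $a\in A$ and $\deg(b)=\binom{3s}{2}\le\binom{n-1}{2}-\binom{n-s}{2}$ for every $b\in B$, so your degree partition is exactly this $(A,B)$; moreover the minimum degree sum over adjacent pairs is $(3s-1)(n-3s)+\binom{3s}{2}>2\bigl(\binom{n-1}{2}-\binom{n-s}{2}\bigr)$, so the hypothesis of the theorem holds. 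This $H$ of course contains a matching of size $s$, but $H[A]$ has no edges at all, so no subset $A'$ can satisfy $\delta_1(H[A'])>\binom{|A'|-1}{2}-\binom{|A'|-s}{2}>0$ and Theorem~\ref{Kuhn2} can never be invoked inside $A$. The reason your double count does not save you is quantitative: $\sum_{v\in A}e_2(v)\le 2|B|\bigl(\binom{n-1}{2}-\binom{n-s}{2}\bigr)$ only bounds the average loss by roughly $2|B|(s-1)n/|A|$, which is of order $n^2$ when $|A|=\Theta(s)$ and $|B|=\Theta(n)$, i.e.\ larger than any vertex degree. So crossing edges are indispensable throughout the whole range $|A|\ge 2s$, and the dichotomy at $|A|=3s$ does not split the problem the way you want.

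The second gap is that the crossing stage, which by the above is essentially the entire proof, is never carried out. The hypothesis is a degree-sum condition on adjacent vertices only; it gives no lower bound on the codegree $|N(a,a')\cap B|$ of a prescribed pair $a,a'\in A$, and such a pair can have empty common neighbourhood in $B$ even though each of $a,a'$ lies in $\Theta(sn)$ crossing edges. Hence choosing the pairing of the leftover $A$-vertices, verifying the Hall/SDR condition against $B$, and making this compatible with the edges you first place inside $A$ (whose existence in the required number you also do not justify) all need genuine arguments; this is precisely where the constant $13$ has to be used, and your proposal only asserts that "the required codegree estimates go through." Finally, the theorem is stated for all $n\ge 13s$ with no largeness assumption, while Theorem~\ref{Kuhn2} holds only for $n\ge n_0$ with an unspecified $n_0$; "handled by a direct count" is not an argument for the remaining cases (this last point is harmless for the way Theorem~\ref{Yi1} is used in this paper, where $n$ is large, but it is a gap for the statement as written).
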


Therefore it suffices to prove Theorem~\ref{the1} for reasonably large $s$. For such $s$, we actually prove a (stronger) stability theorem.

\begin{theorem}\label{the2}
Given $0 < \varepsilon \ll \tau \ll 1$, let $n$ be sufficiently large and $ \tau n < s \leq n/3$.
If $H$ is a $3$-graph of order $n$ without isolated vertex such that $\sigma_2(H) > 2sn -\varepsilon n^2$,  then either $H$ is a subgraph of $H_{n,s}^2$ or $H$ contains a matching of size $s$.
\end{theorem}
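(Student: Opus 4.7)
My plan is to prove Theorem~\ref{the2} by the standard stability-plus-absorbing framework, adapted to the vertex degree sum hypothesis. Fix auxiliary constants $0 < \varepsilon \ll \alpha \ll \beta \ll \tau \ll 1$ and assume $n$ sufficiently large. Call $H$ \emph{$\alpha$-extremal} if there exists a set $T^*\subseteq V(H)$ of size $2s-1$ (write $S^*:=V(H)\setminus T^*$) such that the number of edges of $H$ not contained in the copy of $H_{n,s}^2$ with this partition is at most $\alpha n^3$. The proof splits into a non-extremal case and an extremal case, and in each case shows that $H\subseteq H_{n,s}^2$ or $H$ has a matching of size $s$.

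In the non-extremal case I would find a matching of size $s$ via the absorbing method. First, establish an absorbing lemma: there exists a matching $M_0\subseteq E(H)$ of size $o(n)$ such that for every set $W\subseteq V(H)\setminus V(M_0)$ with $|W|\le \beta n$ and $3\mid |W|$, the induced subhypergraph $H[V(M_0)\cup W]$ has a perfect matching. The existence of $M_0$ follows from showing, via a short counting argument from the $\sigma_2$-condition, that every triple of vertices has many absorbing edges; a standard probabilistic deletion then yields $M_0$. Second, using the non-extremality together with the $\sigma_2$-hypothesis, I would find a matching $M_1$ in $H-V(M_0)$ of size at least $s-|M_0|-\beta n/3$; non-extremality is essential here because $H_{n,s}^2$ is (up to small perturbations) the only obstruction to a large matching under this density condition, and must be ruled out before a greedy or regularity-based extension can succeed. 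Finally, invoke the absorbing property of $M_0$ to absorb the at most $\beta n$ leftover vertices, producing a matching of size exactly $s$.

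In the extremal case, with distinguished partition $T^*,S^*$, I call an edge $e$ of $H$ \emph{good} if $|e\cap T^*|\ge 2$ and \emph{bad} otherwise. If $H$ has no bad edges then $H\subseteq H_{n,s}^2$ and we are done, so assume a bad edge $e$ exists. I would then show that $e$ can be completed to a matching of size $s$ by $s-1$ good edges vertex-disjoint from $V(e)$. When $|e\cap T^*|=1$ the vertex budget is $|T^*|-1=2s-2$ in $T^*$ and $|S^*|-2=n-2s-1$ in $S^*$, which comfortably accommodates $s-1$ good edges via a Hall-type matching argument in the near-complete skeleton $H\cap H_{n,s}^2$. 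When $|e\cap T^*|=0$ the $T^*$-budget gains one vertex but the $S^*$-budget shrinks to $n-2s-2$, forcing the use of at least one $3$-in-$T^*$ edge when $s$ is close to $n/3$; the $\sigma_2$ hypothesis is used here to guarantee that enough good edges and $3$-in-$T^*$ edges survive in $H$ to carry out the completion, using the fact that a missing edge of $H_{n,s}^2$ costs degree at its two $T^*$-vertices.

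The main obstacle is the extremal case, especially as $s$ approaches $n/3$, where the $S^*$-budget becomes tight. Unlike a minimum-degree condition, the $\sigma_2$ condition gives information only at pairs $\{u,v\}$ lying in a common edge of $H$, so one must work harder to control both the placement of bad edges and the distribution of missing good edges inside $H_{n,s}^2$. Ruling out exotic near-extremal configurations that contain a bad edge yet resist completion to a matching of size $s$, and showing that any such configuration must violate the hypothesis $\sigma_2(H)>2sn-\varepsilon n^2$, will be the most delicate part of the argument; in particular, the perfect-matching case $s=n/3$ treated in Theorem~\ref{theoremb1} should serve both as a guide and as a baseline consistency check.
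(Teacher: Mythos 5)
Your proposal takes a genuinely different route from the paper, and as sketched it has a gap in the central step. The paper does not use the absorbing method at all. Instead, it partitions $V(H)$ into $U=\{u:\deg(u)>sn-\varepsilon n^2/2\}$ and the independent set $W=V\setminus U$, observes that $|U|\le 2s-1$ forces $H\subseteq H^2_{n,s}$, and otherwise builds an \emph{optimal} matching (Definition~\ref{def:M}) starting from Lemma~\ref{lemma7}, then derives a contradiction via extremal intersection lemmas (Lemmas~\ref{lemma1}--\ref{Lemma4}) or reduces to the prior results Theorem~\ref{Kuhn2} and Theorem~\ref{theoremb6}, depending on whether $|M_1|=3s-|U|$ and whether $s$ is close to $n/3$. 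No absorbing structure is ever constructed.

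The gap in your proposal lies in the sentence ``the existence of $M_0$ follows from showing, via a short counting argument from the $\sigma_2$-condition, that every triple of vertices has many absorbing edges.'' This is false under the hypotheses. The $\sigma_2$ condition constrains only \emph{adjacent} pairs, and $H$ is required merely to have no isolated vertex; a vertex $v\in W$ can have degree $1$ (its only edge $\{v,u_1,u_2\}$ with $u_1,u_2\in U$), and then every pair containing $v$ that lies in an edge still satisfies the $\sigma_2$ bound because $u_1,u_2$ have huge degree. Such a $v$ admits essentially no absorbing structures, so your absorbing lemma cannot hold for all triples. Even if you restrict absorbing to $U$-vertices (which you would also need in order to choose which leftover vertices to absorb, since you seek a matching of size $s<n/3$, not a perfect matching), the degree lower bound on $U$ is only $sn-\varepsilon n^2/2\approx\tau n^2$ for $s\approx\tau n$, which for small $\tau$ is far below the $\approx\tfrac{5}{18}n^2$ vertex-degree threshold at which absorbing for $3$-graph matchings is known to work. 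Fixing this would require ideas not present in the proposal. Your extremal case is also under-specified: you would need to control how the $\sigma_2$ condition limits missing good edges before any Hall-type completion can go through, and in fact the paper handles the ``$H$ close to $H^2_{n,s}$'' possibility simply by the observation that $|U|\le 2s-1$ already implies $H\subseteq H^2_{n,s}$, with no separate stability case.
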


Theorem~\ref{the1} follows from Theorem~\ref{the2} immediately. Indeed, if $\sigma_2(H) > \sigma_2(H^2_{n, s})$, then it is easy to see that $H$ is not a subgraph of $H_{n,s}^2$.\footnote{Unfortunately $\sigma_2$ is not a monotone function: for example, adding an edge to $H^2_{n, s}$ indeed reduces the value of $\sigma_2$ because two vertices in $S$ now become adjacent and their degree sum is smaller than $\sigma_2(H^2_{n, s})$.} Suppose instead, that $V(H)$ can be partitioned $S\cup T$ such that $|S|= n- 2s+1$, $|T|= 2s-1$, and every edge of $H$ contains at least two vertices of $T$. Since $H$ contains no isolated vertices, every vertex of $S$ is adjacent to some vertex of $T$. Thus $\sigma_2(H)\le \deg(u) + \deg(v)$ for some $u\in S$ and $v\in T$. Consequently $\sigma_2(H) \le \sigma_2(H^2_{n, s})$, a contradiction.
We therefore apply Theorem \ref{the2} to derive that $H$ contains a matching of size $s$. Furthermore, Theorem \ref{the2} implies that $H_{n,s}^2$ is the unique extremal 3-graph for Theorem \ref{the1} because all proper subgraphs $H$ of $H_{n,s}^2$  satisfy $\sigma_2(H) < \sigma_2(H_{n,s}^2)$.

In order to prove Theorem~\ref{the2}, we follow the same approach as in \cite{zhang}: using the condition on $\sigma_2(H)$, we greedily extend a matching of $H$ until it has $s$ edges. An important intermediate step is finding a matching that covers a certain number of low-degree vertices (see Lemma~\ref{lemma7}). Nevertheless, the proof of Theorem~\ref{the2} does require new ideas: in particular, the meaning of an \emph{optimal} matching is more complicated (see Definition~\ref{def:M}); we proceed differently depending on whether the number of low-degree vertices in the optimal matching is at the threshold. In one case we reduce the problem to that of finding a perfect matching in a subgraph of $H$ and apply the main result of  \cite{zhang} (see Theorem~\ref{theoremb6}).


This paper is organized as follows. In Section 2, we give an outline of the proof along with some preliminary results. We prove Lemma~\ref{lemma7} in Section 3 and complete the proof in Section 4.

\vskip.2cm

\n{\bf Notation}
Given a graph $G$ and a vertex $u$ in $G$, $N_G(u)$ is the set of neighbors of $u$ in $G$.
Suppose $H$ is a 3-uniform hypergraph. 
For $u\ne v\in V(H)$, let $N_H(u,v)=\{w\in V(H):\{u,v,w\} \in E(H)\}$ (the subscript is often omitted when $H$ is clear from the context). Given three subsets $V_1,V_2,V_3$ of $V(H)$, we say that an edge $\{v_1,v_2,v_3\}\in E(H)$ is a type of $V_1V_2V_3$ if $v_i \in V_i$ for $1\le i\le 3$.
Given a vertex $v\in V(H)$ and a subset $A \subseteq V(H)$, we define \emph{the link} $L_{v}(A)=\{uw:u, w\in A\mbox{~and~}\{u,v,w\}\in E(H)\}$. When $A$ and $B$ are two disjoint subsets of $V(H)$, we let $L_{v}(A,B)=\{uw:u\in A,~ w\in B \mbox{~and~}\{u,v,w\}\in E(H)\}$.

We write $0 < a_1 \ll a_2 \ll a_3 $ if we can choose the constants $a_1, a_2, a_3$ from right to left. More precisely there are increasing functions $f$ and $g$ such that given $a_3$, whenever we choose some $a_2\leq f(a_3)$ and $a_1 \leq g(a_2)$, all calculations needed in our proof are valid.
\vskip.2cm

\section{Outline of the proof and preliminaries}

Let $n$ be sufficiently large and $ \tau n < s \leq n/3$. Suppose $H$ is a $3$-graph of order $n$ without isolated vertex and $\sigma_2(H) > 2sn -\varepsilon n^2$. Let $U = \{u\in V(H): \deg(u) > sn- \frac{\varepsilon}2 n^2\}$ and $W = V\setminus U$.  Then any two vertices of $W$ are not adjacent -- otherwise $\sigma_2(H)\le  2sn-\varepsilon n^2$, a contradiction. If $|U| \leq 2s-1$, then $H$ is a subgraph of $H_{n,s}^2$ and we are done. We thus assume that $|U| \geq 2s$.

Throughout the proof we use small constants
\begin{align}\label{equ1}
0 <  \varepsilon  \ll \varepsilon'  \ll  \varepsilon''  \ll \eta_1 \ll \eta_2  \ll \gamma \ll \gamma'\ll \tau \ll 1.
\end{align}

We first prove the following lemma, which is an extension of \cite[Lemma 4]{zhang}.

\begin{lemma} \label{lemma7}
Given $0 < \varepsilon \ll \tau \ll 1$, let $n$ be sufficiently large and $ \tau n < s \leq n/3$.
Suppose $H$ is a $3$-graph of order $n$ without isolated vertex and $\sigma_2(H) > 2sn -\varepsilon n^2$.
Let $U =\{u \in V(H): \deg(u) > sn-\varepsilon n^2/2\}$ and $W=V\setminus U$. If $2s \leq |U| \leq 3s$, then $H$ contains a matching of size $3s-|U|$, each of which contains exactly one vertex of $W$.
\end{lemma}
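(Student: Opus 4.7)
The plan is to argue by contradiction, mirroring the proof of \cite[Lemma 4]{zhang} (which handles the $s = n/3$ case) and adapting it to the range $\tau n < s \leq n/3$. Call an edge of $H$ a \emph{$WUU$-edge} if it has exactly one vertex in $W$; since no two $W$-vertices are adjacent, every edge of $H$ meeting $W$ is automatically of this form. Let $M$ be a maximum $WUU$-matching in $H$ and write $m := |M|$. Suppose for contradiction that $m < 3s - |U|$. Set $W' := W \setminus V(M)$, $U_M := U \cap V(M)$ (so $|U_M| = 2m$), and $U' := U \setminus U_M$; elementary arithmetic gives $|W'| \geq n - 3s + 1 \geq 1$ and $|U'| \geq 3|U| - 6s + 2 \geq 2$. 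The first structural observation is that the maximality of $M$ forces every edge of $H$ through any $w \in W'$ to meet $U_M$, so (by non-isolation of $w$) each $w \in W'$ has at least one neighbor in $U_M$.

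The main step is an augmenting swap. Pick $w_0 \in W'$, a neighbor $u_0 \in U_M$, an edge $e_0 = \{w_0, u_0, v_0\} \in E(H)$, and the unique $M$-edge $f_0 = \{w_0^*, u_0, x_0\}$ through $u_0$. If $v_0 \in U'$, I would seek a \emph{single swap}: an edge $\{w_0^*, a, b\} \in E(H)$ with $\{a, b\} \subseteq (U' \setminus \{v_0\}) \cup \{x_0\}$; replacing $f_0$ by this edge and adding $e_0$ yields a $WUU$-matching of size $m + 1$. If instead $v_0 \in U_M \setminus \{u_0\}$, a \emph{double swap} is required, simultaneously replacing the two $M$-edges through $u_0$ and $v_0$ and rerouting their two affected $W$-partners. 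Either swap contradicts the maximality of $M$.

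For the single-swap case, the forbidden pairs in the link graph $L_{w_0^*}$ (those using $U_M \setminus \{x_0\}$ or containing $v_0$) number at most $2m(|U| - 1)$, so the swap succeeds unless $\deg(w_0^*) \leq 2m(|U| - 1)$. The hypothesis $\sigma_2(H) > 2sn - \varepsilon n^2$ applied to the adjacent pair $(w_0^*, u_0)$ together with $\deg(u_0) \leq \binom{n-1}{2}$ gives $\deg(w_0^*) > 2sn - \varepsilon n^2 - \binom{n-1}{2}$; one tries to show the two bounds are incompatible. When $s$ is close to $n/3$, this direct comparison works; for smaller $s$ within $(\tau n, n/3]$ (where the $\sigma_2$ bound is weaker relative to $\binom{n-1}{2}$), one must pick $w_0$ more carefully --- for instance, choosing $w_0$ of minimum degree in $W'$ forces every neighbor $u_0 \in U_M$ to have near-extremal degree, producing additional usable structure --- or iterate the swap along an alternating path of $M$-edges. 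The main obstacle will be the double-swap case, where two compatible replacement edges must exist simultaneously; this is the most technical part and will extend the corresponding argument in \cite[Lemma 4]{zhang} using the $\varepsilon n^2$ slack in the hypothesis and the hierarchy (\ref{equ1}).
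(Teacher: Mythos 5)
Your plan takes a genuinely different route from the paper, but more importantly it has real gaps that cannot be closed as stated. The paper's proof never tries to exhibit an augmenting swap explicitly; instead it uses the maximality of the $WUU$-matching $M$ purely \emph{negatively}, to certify that various families of edges are absent, and then compares resulting upper bounds on degree sums against the hypothesis $\sigma_2(H) > 2sn - \varepsilon n^2$. Concretely, the paper splits on whether $|U| < 2s + \varepsilon' n$ or $|U| \ge 2s + \varepsilon' n$; in the first case it shows $|M| \ge s - \varepsilon'' n$ and every $U$-vertex has a $W$-neighbor, then bounds $\deg(u_1)+\deg(u_2)+\deg(v_0)$ for two vertices of $U_2$ and one of $W_2$; in the second case it further splits on whether $U_2$ and $W_2$ are adjacent, and when they are, introduces the auxiliary family $M' = \{e \in M : \exists u' \in e,\ |N(v_0,u')\cap U_2| \ge 3\}$ to simultaneously control $\deg(v_0)$ and $\deg(u_0)$. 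None of this structure (the threshold $2s + \varepsilon' n$, the auxiliary $M'$) appears in your plan, and it is precisely what makes the counting close.

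The most concrete gap is in your single-swap argument. You estimate the forbidden pairs in $L_{w_0^*}$ by $2m(|U|-1)$ and hope to beat this with the lower bound $\deg(w_0^*) > 2sn - \varepsilon n^2 - \binom{n-1}{2}$. But $w_0^* \in W$, so $\deg(w_0^*) \le sn - \tfrac12 \varepsilon n^2$ by the very definition of $W$; the $\sigma_2$ hypothesis gives you essentially no usable lower bound on $\deg(w_0^*)$ because $\deg(u_0)$ for $u_0 \in U$ can be as large as roughly $\binom{n-1}{2}$. Meanwhile the forbidden-pair count $2m(|U|-1) < 2(3s-|U|)(|U|-1)$ is, on the range $|U|\in[2s,3s]$, largest at $|U| = 2s$, where it is about $4s^2$. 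At $s = n/3$ that is $\approx 4n^2/9$, while $\deg(w_0^*) \le n^2/3$; so the comparison fails even in the regime you claim works (``when $s$ is close to $n/3$''), and fails trivially for smaller $s$ where $2sn - \varepsilon n^2 - \binom{n-1}{2}$ is negative. The double-swap case, which you identify as the hard one, is not argued at all, and the fallback suggestions (choosing $w_0$ of minimum degree, iterating along an alternating path) are left entirely open. To make progress you would in effect be forced to rediscover the paper's global counting over all pairs $e_1, e_2 \in M$ (Claim 16) and the subcase machinery of Case 2, so I would recommend switching to that strategy rather than trying to salvage a local augmentation argument.
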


\begin{definition}\label{def:M}
We call a matching $M$ \emph{optimal} if  (i) $M$ contains a submatching $M_1 = \{e\in M : e\cap W \neq \emptyset  \}$ of size at least  $3s-|U|$; (ii) subject to (i), $|M|$ is as large as possible; (iii) subject to (i) and (ii), $|M_1|$ is as large as possible.
\end{definition}


Lemma \ref{lemma7} shows that $H$ contains an optimal matching $M$. We separate the cases when $|M_1| = 3s-|U|$ and when $|M_1| > 3s-|U|$. When $|M_1| = 3s-|U|$, we first consider the case when $s \leq n/3-\eta_1n$. If no vertex of $U_3:= U\setminus V(M)$ is adjacent to any vertex of $W_2:= W\setminus V(M)$, then the assumption  $|M_1| = 3s-|U|$ forces $\sum_{i=1}^{3}\deg(u_i)$ to be smaller than $3sn-\frac{3}{2}\varepsilon n^2$ for any three vertices $u_1, u_2, u_3\in U_3$. If some vertex $u_1\in U_3$ is adjacent to $v_1\in W_2$, then the fact  $v_1 \in W$ reduces $ \sum_{i=1}^2\deg(u_i)+\deg(v_1)$ to a number less than $3sn-\frac{3}{2}\varepsilon n^2$ (where $u_2$ is another vertex of $U_3$).  When $s > n/3-\eta_1n$, we consider $H' =H[V\setminus W_2]$. Since $|W_2|= n-3s$ is very small, we deduce that $\sigma_2(H')$ is greater than $2sn- \eta_2 n^2$. This allows us to apply the following theorem from \cite{zhang} to obtain a perfect matching of $H'$, which is also a matching of size $s$ of $H$.

\begin{theorem} \label{theoremb6}\cite{zhang}
There exist $\eta_2>0$ and $n_0 \in \mathbb{N}$ such that the following holds for all integers $n\ge n_0$ that are divisible by $3$.  Suppose that $H$ is a $3$-graph of order $n$ without isolated vertex and $\sigma_2(H) > 2n^2/3-\eta_2 n^2$, then either $H$ is a subgraph of $ H^2_{n,n/3}$ or $H$ contains a perfect matching.
\end{theorem}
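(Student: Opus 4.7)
The plan is to prove Theorem~\ref{theoremb6} as a stability refinement of Theorem~\ref{theoremb1}. Assume $H$ has no perfect matching; the goal is to show $H\subseteq H^2_{n,n/3}$. Following the outline the authors sketch for Theorem~\ref{the2}, define
\[
U = \{v \in V(H) : \deg(v) > \tfrac{n^2}{3} - \tfrac{\eta_2}{2} n^2\}, \qquad W = V(H) \setminus U.
\]
The hypothesis $\sigma_2(H) > \frac{2n^2}{3} - \eta_2 n^2$ immediately forces any two vertices of $W$ to be non-adjacent, since otherwise their degree sum would be at most $\frac{2n^2}{3} - \eta_2 n^2$. The argument splits according to $|U|$.

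If $|U| \leq 2n/3 - 1$, every edge meets $W$ at most once and hence contains at least two vertices in $U$. Pick any $T \supseteq U$ with $|T| = 2n/3 - 1$ (adding $(2n/3 - 1) - |U|$ arbitrary vertices of $W$ to $U$) and set $S = V(H)\setminus T$, so that $|S| = n/3+1$; then every edge of $H$ has at least two vertices in $T$, realizing $H$ as a subgraph of $H^2_{n,n/3}$, and we are done.

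If instead $|U| \ge 2n/3$, I will construct a perfect matching of $H$ and derive a contradiction. Each $w\in W$ lies in some edge (no isolated vertex), and by the $W$-non-adjacency every such edge has the form $\{w,u,u'\}$ with $u,u'\in U$. Because every $u\in U$ has degree close to $\binom{n-1}{2}$, pairs of vertices in $U$ share large common link neighborhoods, so a greedy/Hall-type procedure produces a matching $M_0$ of size $|W|$ saturating $W$ whose remaining endpoints all lie in $U$. The residual hypergraph $H' = H[V(H)\setminus V(M_0)]$ then lives on $n-3|W|$ vertices (a multiple of $3$), all in $U$, and one completes a perfect matching of $H'$ either by invoking Theorem~\ref{Kuhn2} on $H'$ (when $|W|$ is small enough that the minimum degree of $H'$ comfortably exceeds $\delta_1(H^1_{n-3|W|,\,(n-3|W|)/3})$) or by a direct absorption/augmentation argument. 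Gluing $M_0$ to this perfect matching contradicts our assumption.

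The main obstacle is the boundary of the second case, when $|W|$ is close to $n/3$ and $|U|\approx 2|W|$: the greedy construction of $M_0$ then has essentially no slack (each edge of $M_0$ consumes two $U$-vertices), and after deleting $V(M_0)$ the minimum degree of $H'$ need not be large enough to feed into Theorem~\ref{Kuhn2}. The standard remedy is to reserve a small absorbing matching inside $U$ \emph{before} extracting $M_0$, so that vertices left over by the greedy phase can be swapped into the absorber while preserving a matching; tuning such an absorber so that it coexists with a $W$-saturating greedy step, respects the divisibility $3\mid n-3|W|$, and covers every $W$-vertex (which is forced because $W$ is independent), is the technical heart of the proof and parallels the ``optimal matching'' dichotomy the authors introduce in Definition~\ref{def:M} for Theorem~\ref{the2}.
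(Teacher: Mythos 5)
This theorem is cited from \cite{zhang} rather than proved in the present paper, so there is no in-text proof here to compare against; I am assessing your sketch on its own merits and against the method of \cite{zhang}, which parallels this paper's proof of Theorem~\ref{the2}. Your case split on $|U|$ is right, and the case $|U|\le 2n/3-1$ is fine. The gap is in the case $|U|\ge 2n/3$. You assert that a greedy or Hall-type procedure yields a $W$-saturating matching $M_0$ with both remaining endpoints in $U$, but the hypothesis $\sigma_2(H)>2n^2/3-\eta_2 n^2$ constrains only \emph{adjacent} pairs: a vertex $w\in W$ may have degree barely above $n^2/3-\eta_2 n^2/2$ with its link concentrated on a small portion of $U$, and nothing in the hypothesis controls how the links of distinct $W$-vertices overlap, so a Hall condition need not hold --- precisely when $|W|\approx n/3$ and $|U|\approx 2|W|$, the regime you yourself flag as having ``no slack.'' Your proposed remedy, an absorbing matching inside $U$ built before extracting $M_0$, is named but never constructed, and the standard absorption lemmas presuppose a genuine minimum-degree or minimum-codegree condition that an adjacent-pair degree-sum condition does not supply.

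For comparison, the proof in \cite{zhang} dispenses with Hall and absorption entirely. It first proves a lemma of the shape of Lemma~\ref{lemma7} here (their Lemma~4), producing by explicit degree counting a matching in which the required number of edges each meet $W$ exactly once; it then fixes an optimal matching in the sense of Definition~\ref{def:M} and, supposing it is not perfect, bounds the degree sum of three leftover high-degree vertices by applying the link lemmas (Lemmas~\ref{lemma1}, \ref{lemmaa2}, \ref{lemma3}) across pairs of matching edges and across single matching edges, reaching a contradiction with $\sigma_2(H)>2n^2/3-\eta_2 n^2$. That computation never needs to saturate $W$ in one shot nor to reserve an absorber, which is why it closes the boundary case that your sketch leaves open.
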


Now consider the case when $|M_1| > 3s-|U|$. Let $W':=\{v \in W: \deg(v) \leq sn- s^2/2+\gamma' n^2\}$. If $|W'|$ is very small,  then we can find a matching of size $s$ in $H[V\setminus W']$ by Theorem \ref{Kuhn2}.  When $|W'|$ is not small, we consider $u_1, u_2, u_3\in U_3$. If one of $u_1$, $u_2$, $u_3$ is adjacent to one vertex from $W'$, then $\sum_{i=1}^{3}\deg(u_i)$ becomes much larger than $3sn$; otherwise we show that $\sum_{i=1}^{3}\deg(u_i)< 3sn-\frac{3}{2}\varepsilon n^2$ by proceeding with the cases when $|W' \cap W_1|  > \gamma n/2$ and when $|W' \cap W_2|  > \gamma n/2$ separately.


\medskip

\medskip
In the proof we need several (simple) extremal results on (hyper)graphs. Lemma \ref{lemma1} is Observation 1.8 of Aharoni and Howard \cite{Aha}. Lemmas \ref{lemmaa2} and \ref{lemma3} are from \cite{zhang}.
A $k$-graph $H$ is called \emph{$k$-partite} if $V(H)$ can be partitioned into $V_1,\ldots,V_k$, such that each edge of $H$ meets every $V_i$ in precisely one vertex. If all parts are of the same size  $n$, we call $H$ \emph{$n$-balanced}.

\begin{lemma}\cite{Aha}
\label{lemma1}
Let $F$ be the edge set of an $n$-balanced $k$-partite $k$-graph. If $F$ does not contain $s$ disjoint edges, then $|F| \leq (s-1)n^{k-1}$.
\end{lemma}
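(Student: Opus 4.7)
The plan is to prove the contrapositive via a one-shot first-moment argument: sample a uniformly random perfect matching of the complete $n$-balanced $k$-partite $k$-graph and count how many of its edges lie in $F$. Concretely, assume $|F| > (s-1)n^{k-1}$; I will exhibit $s$ pairwise disjoint edges of $F$.

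Let $K = K_{n,\ldots,n}$ denote the complete $n$-balanced $k$-partite $k$-graph on the same vertex partition $V_1,\ldots,V_k$, and label each $V_i = \{v_i^1,\ldots,v_i^n\}$. A perfect matching of $K$ is equivalent to a tuple of $k-1$ permutations $\sigma_2,\ldots,\sigma_k \in S_n$, namely
\[
\pi_{\sigma} \;=\; \bigl\{\{v_1^j,\, v_2^{\sigma_2(j)},\,\ldots,\, v_k^{\sigma_k(j)}\} : j \in [n]\bigr\};
\]
this gives a bijection between $(S_n)^{k-1}$ and the set of perfect matchings of $K$, so $K$ has exactly $(n!)^{k-1}$ perfect matchings. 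For any fixed edge $e = \{v_1^{j_1},\ldots,v_k^{j_k}\}$ of $K$, the number of perfect matchings containing $e$ equals the number of tuples $(\sigma_2,\ldots,\sigma_k)$ with $\sigma_i(j_1) = j_i$ for every $i \geq 2$, which is $((n-1)!)^{k-1}$. Hence if $\pi$ is drawn uniformly at random, then $\Pr[e \in \pi] = 1/n^{k-1}$ for every edge $e$ of $K$, independently of $e$.

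By linearity of expectation,
\[
\mathbb{E}\bigl[|F \cap \pi|\bigr] \;=\; \sum_{e \in F} \Pr[e \in \pi] \;=\; \frac{|F|}{n^{k-1}} \;>\; s-1.
\]
Therefore some perfect matching $\pi^*$ of $K$ satisfies $|F \cap \pi^*| \geq s$. These at least $s$ edges lie in $F$ and are pairwise disjoint (since they belong to the matching $\pi^*$), producing an $s$-matching of $F$ and contradicting the hypothesis. This proves the lemma.

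There is no serious obstacle in this approach; the entire argument reduces to computing $\Pr[e \in \pi] = 1/n^{k-1}$ correctly and invoking the probabilistic pigeonhole. The only thing worth double-checking is the counting of perfect matchings of $K$ containing a prescribed edge, which is standard.
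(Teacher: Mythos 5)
Your argument is correct: the bijection with $(S_n)^{k-1}$, the count $((n-1)!)^{k-1}$ of perfect matchings through a fixed edge, and hence $\Pr[e\in\pi]=1/n^{k-1}$ are all right, and the integrality step $\mathbb{E}[|F\cap\pi|]>s-1 \Rightarrow |F\cap\pi^*|\ge s$ for some perfect matching $\pi^*$ completes the contrapositive. The paper itself gives no proof (it cites Observation~1.8 of Aharoni and Howard), and your averaging over random perfect matchings is essentially the standard argument for this bound, equivalent to decomposing the complete $n$-balanced $k$-partite $k$-graph into $n^{k-1}$ perfect matchings, each of which can contain at most $s-1$ edges of $F$.
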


\begin{lemma}\cite{zhang}\label{lemmaa2}
Let $G_1, G_2, G_3$ be three graphs on the same set $V$ of $n\ge 4$ vertices such that every edge of $G_1$ intersects every edge of $G_i$ for both $i=2, 3$. Then $\sum_{i=1}^3\sum_{v\in A} \deg_{G_i}(v) \leq 6(n-1)$ for any set $A\subset V$ of size $3$.
\end{lemma}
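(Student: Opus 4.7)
The plan is a case analysis on the matching number $\mu(G_1)$, exploiting the fact that cross-intersection with $G_1$ severely restricts $G_2$ and $G_3$. Write $T:=\sum_{i=1}^{3}\sum_{v\in A}\deg_{G_i}(v)$ for the quantity to be bounded and $f_i:=\sum_{v\in A}\deg_{G_i}(v)$, so $T=f_1+f_2+f_3$. The easy cases are $G_1=\emptyset$ (for which $T\le 2\cdot 3(n-1)=6(n-1)$) and $\mu(G_1)\ge 3$: three pairwise disjoint edges of $G_1$ cannot all be hit by a single edge (of size $2$) in $G_2$ or $G_3$, forcing $G_2=G_3=\emptyset$ and $T\le 3(n-1)$.

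If $\mu(G_1)=2$ with a matching $\{a,b\},\{c,d\}$, then every edge of $G_i$ ($i=2,3$) has exactly one endpoint in each pair, so $E(G_i)\subseteq K_{\{a,b\},\{c,d\}}$ and $|E(G_i)|\le 4$, giving $f_i\le 8$; combined with $f_1\le 3(n-1)$, this yields $T\le 6(n-1)$ for $n\ge 7$. The remaining case $\mu(G_1)=1$ means $G_1$ is a triangle or a star at some $v_0$. A triangle confines $G_2,G_3$ to the same three-vertex triangle, so $f_i\le 6$; a star at $v_0$ with $k\ge 3$ leaves forces $G_2,G_3$ into the same star ($f_i\le n+1$); a star with two leaves allows one additional leaf-leaf edge as well ($f_i\le n+3$). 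The tight case is the single edge $G_1=\{\{u,v\}\}$: here $G_2,G_3$ need only lie in the ``book'' at $\{u,v\}$ (edges incident to $u$ or $v$), so $f_2,f_3\le 2n$ and $f_1\le 2$, giving $T\le 4n+2\le 6(n-1)$ for $n\ge 4$, with equality precisely when $\{u,v\}\subseteq A$ and $G_2=G_3$ is the full book at $\{u,v\}$.

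The main obstacle is the case work needed for small $n$ (in particular $4\le n\le 6$ in the $\mu(G_1)=2$ branch, and a few scattered sub-cases elsewhere), where the trivial bound $f_1\le 3(n-1)$ is too generous. The key idea is to apply cross-intersection in the reverse direction: any non-empty $G_2$ or $G_3$ itself forces $G_1$ to sit inside a book, which then yields a much tighter bound on $f_1$. Once this mutual structural information is extracted, each remaining sub-case reduces to a short direct verification.
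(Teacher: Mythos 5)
The paper never proves this lemma itself -- it is quoted from \cite{zhang} -- so there is no in-paper argument to compare against; your proof has to stand on its own, and in essence it does. The case analysis on the matching number of $G_1$ is sound: the classification for $\mu(G_1)=1$ (triangle or star), the confinement of $G_2,G_3$ to the two matched pairs when $\mu(G_1)=2$, the emptiness of $G_2,G_3$ when $\mu(G_1)\ge 3$, and the book bound $f_2,f_3\le 2n$, $f_1\le 2$ in the single-edge case are all correct, and the stated numerical thresholds check out.

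The only place where the write-up is thinner than the claim ``each remaining sub-case reduces to a short direct verification'' is the $\mu(G_1)=2$ branch for $n=4,5$. The reverse-book idea you name gives $f_1\le 2n$ once $G_2\cup G_3\neq\emptyset$, but your bound $f_2,f_3\le 8$ should first be sharpened to $f_2,f_3\le 6$: all degrees of $G_2,G_3$ live on the four matched vertices $a,b,c,d$, and $|A|=3$ forces at least one of them outside $A$, so the sum over $A$ is at most the three largest of four degrees in a subgraph of $K_{\{a,b\},\{c,d\}}$, hence at most $6$. With that, $T\le 2n+12\le 6(n-1)$ for $n\ge 5$, but at $n=4$ this still gives $20>18$, so one more observation is needed: $f_2=6$ (or $f_3=6$) forces $G_2$ to be all four edges between $\{a,b\}$ and $\{c,d\}$, which in turn forces $G_1\subseteq\{ab,cd\}$ and hence $f_1\le 3$, giving $T\le 15$; otherwise $f_2,f_3\le 5$ and $T\le 2n+10=18$. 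These are exactly the kind of short verifications you promised, and they do close the argument, but they are not automatic consequences of the book bound alone, so they should be spelled out rather than waved at.
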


\begin{lemma}\cite{zhang}\label{lemma3}
Let $G_1, G_2, G_3$ be three graphs on the same set $V$ of $n\ge 5$ vertices such that for any $i\ne j$, every edge of $G_i$ intersects every edge of $G_j$.
Then $\sum_{i=1}^3\sum_{v\in A} \deg_{G_i}(v) \leq 3(n+1)$ for any set $A\subset V$ of size $3$.
\end{lemma}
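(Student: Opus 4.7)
The pairwise cross-intersecting hypothesis is highly restrictive, so I would prove Lemma~\ref{lemma3} by structural case analysis of the $G_i$. The main observation driving everything is that whenever some $G_i$ contains two disjoint edges, the other graphs collapse into a very small bipartite graph.

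\emph{Case 1: some $G_i$ contains a matching of size $2$.} Without loss of generality $G_1$ contains disjoint edges $\{a_1,b_1\}$ and $\{a_2,b_2\}$. Since every edge of $G_j$ ($j\ne 1$) must meet both edges, each edge of $G_2$ and of $G_3$ has exactly one endpoint in $\{a_1,b_1\}$ and one in $\{a_2,b_2\}$, so $G_2,G_3$ are subgraphs of the bipartite graph on $\{a_1,b_1\}\cup\{a_2,b_2\}$; in particular $|E(G_j)|\le 4$ and the maximum degree in $G_j$ is at most $2$. I would then sub-case: if $G_2$ or $G_3$ also contains a matching of size $2$, then $G_1$ is symmetrically trapped in a $K_{2,2}$ and the total degree sum is at most $18$; otherwise $G_2$ and $G_3$ are stars inside the $K_{2,2}$, and a short argument shows that $G_1$ is contained in a star centered at some vertex $c\in\{a_1,b_1,a_2,b_2\}$ together with at most one extra edge. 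This gives $\sum_{v\in A}\deg_{G_1}(v)\le n+3$ and total at most $n+11\le 3(n+1)$ for $n\ge 5$.

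\emph{Case 2: no $G_i$ contains a matching of size $2$.} Then each $G_i$ is an intersecting graph, hence (by the classical fact, provable elementarily) a subgraph of either a star or a triangle. I would then prove the pivotal structural dichotomy using the pairwise cross-intersecting condition: all three $G_i$ lie inside a common star centered at some vertex $v$, or all three lie inside a common triangle on some $\{x,y,z\}$. In the common-star case only the center can have degree up to $n-1$ in each $G_i$, while every other vertex of $A$ has degree at most $1$ in each $G_i$, so $\sum\le 3(n-1)+6=3(n+1)$; this is tight when $G_1=G_2=G_3$ is the full star and $v\in A$. In the common-triangle case every vertex has degree at most $2$ in each $G_i$, so $\sum\le 3\cdot 3\cdot 2 =18\le 3(n+1)$ for $n\ge 5$.

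The main obstacle is establishing the structural dichotomy in Case~2. It rests on two sub-claims: (a) if some $G_i$ is a triangle on $T$, then every other non-empty $G_j$ must lie inside the triangle on $T$, since an edge of $G_j$ with at most one endpoint in $T$ would fail to meet some edge of the triangle; and (b) if $G_i,G_j$ are stars with distinct centers $v_i\ne v_j$ and at least two edges each, a case analysis of their cross-intersections forces them to share the edge $\{v_i,v_j\}$ and to have all other edges ending at a single common third vertex $z$, so both lie inside the triangle on $\{v_i,v_j,z\}$. Applying (a) and (b) across the three graphs collapses the configuration into a common star or a common triangle, after which the bound follows from the direct degree computations above.
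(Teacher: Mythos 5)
Your overall strategy (case on whether some $G_i$ has a $2$-matching, then use the cross-intersecting condition to pin down the structure) is reasonable, but both cases as written have concrete gaps. In Case~1, once you know $G_2,G_3$ are intersecting subgraphs of the $C_4$ on $\{a_1,b_1,a_2,b_2\}$, you claim a ``short argument shows that $G_1$ is contained in a star centered at some $c$ together with at most one extra edge.'' That claim is true when $G_2$ (or $G_3$) has two edges $cx_1,cx_2$, since an edge of $G_1$ avoiding $c$ is forced to be $x_1x_2$. It fails, however, when both $G_2$ and $G_3$ have at most one edge: if, say, $G_2=\{ab\}$ and $G_3=\emptyset$, then $G_1$ need only hit the single edge $ab$ and can be the union of the full stars at $a$ and at $b$, a ``double star'' with $\sum_{v\in A}\deg_{G_1}(v)$ as large as roughly $2n$. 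The lemma still holds (that sub-case gives at most $2n+4\le 3(n+1)$ since $G_2,G_3$ then contribute at most $2$ each), but your stated structure and the bound $n+11$ do not cover it, so this sub-case must be split off and handled separately.

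In Case~2 the structural dichotomy you aim for is true, but sub-claims (a) and (b) as stated do not establish it: (a) needs $G_i$ to be a genuine $3$-edge triangle, and (b) assumes both stars have at least two edges, so configurations where some $G_i$ is empty or a single edge are not covered by the outline. There is a much cleaner route to the dichotomy: since each $G_i$ is intersecting and the $G_i$ are pairwise cross-intersecting, the union $G_1\cup G_2\cup G_3$ is itself an intersecting graph, hence a subgraph of a single star or of a single triangle, which is exactly the dichotomy. But in fact Case~2 does not need the dichotomy (or the cross-intersecting hypothesis) at all: each $G_i$, being intersecting, is individually a subgraph of a star or of a triangle, so $\sum_{v\in A}\deg_{G_i}(v)\le n+1$ (at most $(n-1)+1+1$ for a star with center in $A$, at most $3$ if the center is outside $A$, and at most $6\le n+1$ for a triangle when $n\ge 5$); summing over $i$ gives $3(n+1)$ directly. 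So Case~2 should be replaced by this one-line argument, and Case~1 needs the extra sub-case where both off-graphs have at most one edge.
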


Following the same proof of Lemmas \ref{lemmaa2} and \ref{lemma3} from \cite{zhang}, we obtain another lemma and omit its proof.

\begin{lemma}\label{lemmaa3}
Let $G_1, \cdots,G_k$ be $k$ graphs on the same set $V$ of $n \geq 4$ vertices such that for any $1 \leq i < j \leq k$, every edge of  $G_i$ intersects  every edge of $G_j$. Then $\sum_{i=1}^k\sum_{v\in A} \deg_{G_i}(v) \leq kn$ for any set $A\subset V$ of size $2$.\qed
\end{lemma}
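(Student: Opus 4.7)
Write $A=\{a,b\}$ and $d_i=\deg_{G_i}(a)+\deg_{G_i}(b)$; the goal is $\sum_{i=1}^k d_i\le kn$. I would proceed by a case analysis on the matching numbers $\nu(G_i)$, paralleling the proofs of Lemmas~\ref{lemmaa2} and \ref{lemma3}.

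First two easy cases. If $\nu(G_i)\ge 3$ for some $i$, then a single $2$-vertex edge of any $G_j$ ($j\ne i$) cannot meet three pairwise disjoint edges, so $G_j=\emptyset$ for every $j\ne i$ and $\sum_\ell d_\ell=d_i\le 2(n-1)\le kn$. If $\nu(G_i)\le 1$ for every $i$, then each non-empty $G_i$ is a star or a triangle, and a direct count gives $d_i\le n-1+1=n$ when $G_i$ is a star centred at a vertex of $A$ and $d_i\le 4$ in all other cases; since $n\ge 4$, summing yields $\sum d_i\le kn$.

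The main case is when some $G_i$, say $G_1$, has $\nu(G_1)=2$. Fix disjoint $e_1,e_2\in E(G_1)$ and set $T=V(e_1)\cup V(e_2)$. For every $j\ne 1$, each edge of $G_j$ must meet both $e_1$ and $e_2$, hence has one endpoint in each; so $G_j$ is a subgraph of the $K_{2,2}$ on $T$ with bipartition $(V(e_1),V(e_2))$. This gives $\deg_{G_j}(v)\le 2$ with support in $T$, and so $d_j\le 2|A\cap T|\le 4$. If some other $G_{j_0}$ also has $\nu(G_{j_0})\ge 2$, then the symmetric argument traps $G_1$ in a $K_{2,2}$ on $T$ as well (with bipartition given by the two disjoint edges of $G_{j_0}$), forcing $d_1\le 4$ and $\sum d_i\le 4k\le kn$. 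Otherwise every $G_j$ ($j\ne 1$) is a sub-star of the $K_{2,2}$ on $T$. Let $F=\bigcup_{j\ne 1}E(G_j)$; each edge of $G_1$ must meet every edge of $F$. If $F=\emptyset$ the trivial bound $d_1\le 2(n-1)\le kn$ suffices; if $F$ contains two disjoint edges of $K_{2,2}$ the previous reduction again yields $d_1\le 4$; otherwise $F$ is contained in the cherry $\{cw_1,cw_2\}$ at some $c\in T$, and a short check shows each edge of $G_1$ either contains $c$ or, when $|F|=2$, is the unique edge $\{w_1,w_2\}$ of $K_{2,2}[T]$ opposite~$c$. In each subcase a direct computation bounds $d_1$ (by $n+1$ or $2(n-1)$) and each $d_j$ (by $3$ or $2$) so that $\sum d_i\le kn$, an inequality which becomes tight only at $n=4$, $k=2$.

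The one delicate step is this final ``cherry'' subcase: one must apply the cross-intersection hypothesis jointly to $G_1$ and the \emph{union} $F$ of the other graphs' edges, not to any single $G_j$, in order to improve the naive $d_1\le 2(n-1)$ to the bound needed. After that, the conclusion $\sum d_i\le kn$ is an elementary arithmetic check. Everything else is bookkeeping essentially identical to what appears in the proofs of Lemmas~\ref{lemmaa2} and \ref{lemma3}, which is why the paper's authors omit the details.
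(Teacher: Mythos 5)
Your proof is correct and takes essentially the approach the paper intends: the paper omits the argument by deferring to the proofs of Lemmas~\ref{lemmaa2} and~\ref{lemma3} in \cite{zhang}, and your case analysis on the matching numbers of the $G_i$ (reducing to stars, triangles, and $K_{2,2}$-configurations on the four vertices spanned by two disjoint edges, with the union $F$ of the other graphs' edges handled jointly) is exactly that kind of argument, and all the bounds check out, including the cherry subcase where $d_1\le n+1$ and $d_j\le 3$ give $n+1+3(k-1)\le kn$ for $n\ge 4$, $k\ge 2$. Two harmless slips: the pair $\{w_1,w_2\}$ is not an edge of the $K_{2,2}$ on $T$ but one of its sides (namely $V(e_1)$ or $V(e_2)$), which does not affect your computation, and your estimates such as $2(n-1)\le kn$ implicitly use $k\ge 2$ -- an assumption also implicit in the paper's statement, since for $k=1$ the claim is false and the lemma is only ever applied with $k\in\{2,3\}$.
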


The following lemma needs slightly more work so we include a proof.
\begin{lemma}\label{Lemma4}
Given two disjoint vertex sets $A=\{u_1,u_2,\ldots,u_a\}$ and $B=\{v_1,v_2,\ldots,v_b\}$ with $ a \geq 3$ and $ b\geq 1$. Let $G_i$, $i=1,2,3$, be graphs on $A\cup B$ such that every vertex of $B$ is isolated vertex in $G_1$, and every edge of $G_i$ ($i=2,3$) contains at least one vertex of $A$. If there are no two disjoint edges (i) one from $G_1$ and the other from $G_2$ or $G_3$; or (ii) one from $G_2$ and the other from $G_3$, and at least one of them contains a vertex from $B$, then
 \[
 \sum_{i=1}^3\left(\sum_{j=1}^2 \deg_{G_i}(u_j)+ \deg_{G_i}(v_1)\right) \leq \max\{4a+7,3a+2b+5\}.\]
\end{lemma}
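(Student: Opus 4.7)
The plan is to argue by a case analysis on the matching number of $G_1$, adapting the template of Lemma~\ref{lemmaa2}. I first set $T$ for the sum to bound and record that $\deg_{G_1}(v_1)=0$ because $v_1\in B$ is isolated in $G_1$. I decompose $G_i=F_i\cup X_i$ for $i=2,3$, where $F_i=G_i\cap\binom{A}{2}$ consists of the edges of $G_i$ inside $A$ and $X_i$ of the bipartite edges of $G_i$ between $A$ and $B$ (the hypothesis forbids edges of $G_i$ inside $B$). The hypotheses rewrite as (I) every edge of $G_1$ meets every edge of $F_j\cup X_j$ for $j=2,3$, and (II) every edge of $X_i$ meets every edge of $G_{5-i}$, for $i=2,3$.

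If $G_1$ contains two disjoint edges $e,f\subseteq A$, then any bipartite edge $u\alpha\in X_i$ with $u\in A$, $\alpha\in B$ would need to meet both $e$ and $f$, forcing $u\in e\cap f=\emptyset$; hence $X_2=X_3=\emptyset$, and in particular $\deg_{G_i}(v_1)=0$ for $i=2,3$. Likewise each edge of $F_i$ must have one endpoint in each of $e,f$, giving $|F_i|\le 4$. Since a matching of size $2$ requires $4$ vertices, we have $a\ge 4$, and thus $T\le 2(a-1)+8+8=2a+14\le 4a+7$.

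Otherwise $G_1$ has matching number at most $1$, so $G_1$ is empty, a star, or a triangle. Two structural observations drive the remaining analysis. First, if $F_i$ has matching number $\ge 2$, then condition~(II) applied to any edge of $X_{5-i}$ forces its $A$-endpoint to lie in two disjoint edges of $F_i$, which is impossible, so $X_{5-i}=\emptyset$. Second, if $X_i$ has matching number $\ge 2$, say with disjoint edges $u_av_a,u_bv_b$, then every edge of $G_{5-i}$ is confined to $\{u_au_b,u_av_b,v_au_b\}$ (the possibility $v_av_b$ being excluded because $G_{5-i}$ has no $B$-$B$ edge), so $|G_{5-i}|\le 3$. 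Combining these observations with the structure of $G_1$ (triangle, a star with many edges through a single vertex $x\in A$, or at most two edges) and of $X_2,X_3$ (empty, star, or matching $\ge 2$) reduces the problem to a short list of sub-cases, each of which is handled by direct counting or by applying Lemma~\ref{lemmaa3} to an appropriate pairwise-intersecting triple of graphs.

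The main obstacle is the residual regime in which $G_1$ is small, both $F_2,F_3$ have matching number $\le 1$, and $X_2,X_3$ are non-trivial bipartite stars: this is where the bound $3a+2b+5$ becomes tight. Large contributions to $T$ can come from $\deg_{X_i}(v_1)\le a$ (when $X_i$ is a star centered at $v_1$) and from $\deg_{X_i}(u_j)\le b$ (when $X_i$ is a star centered at $u_j\in A$); conditions~(I) and~(II) together prevent both kinds of growth from occurring simultaneously in both $G_2$ and $G_3$. A careful case distinction on the location of the star centers of $X_2,X_3$ relative to $\{u_1,u_2,v_1\}$, combined with elementary bounds such as $\deg_{F_2}(u_1)+\deg_{F_2}(u_2)\le 2(a-1)$, yields $T\le 3a+2b+5$ in this regime, and taking the maximum over all sub-case bounds completes the proof.
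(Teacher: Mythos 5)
Your opening case ($G_1$ has matching number $\geq 2$, forcing $X_2=X_3=\emptyset$ and $|F_i|\le 4$) is correct and complete, and your two structural observations about $F_i$ and $X_i$ are sound. The decomposition $G_i=F_i\cup X_i$ together with a case split on matching numbers is a genuinely different organization from the paper, which instead branches first on whether $\deg_{G_i}(v_1)\ge 3$ for some $i\in\{2,3\}$ and then on whether $|N_{G_i}(u_j)\cap B|\ge 2$, and closes with an application of Lemma~\ref{lemmaa3} to $G_1$ and $G_2$ restricted to the augmented set $A'=A\cup\{v_p\}$.

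However, the residual case is a genuine gap, not merely a condensed step. You describe the shape of the argument (``careful case distinction on the location of the star centers of $X_2,X_3$'') but never carry out the counting, and this is exactly where both target bounds must materialize. In particular, in the regime where all $|N_{G_i}(u_j)\cap B|\le 1$ and $\deg_{G_i}(v_1)\le 2$ (the paper's last sub-case, yielding $4a+7$), a naive degree count gives $\deg_{F_i}(u_1)+\deg_{F_i}(u_2)\le 2(a-1)$ for each $i$, which already sums to roughly $6a$ across $i=1,2,3$ — far above $4a+7$. To get below this one needs a nontrivial interaction between $G_1$ and $G_2$ (the paper extracts it via Lemma~\ref{lemmaa3} applied to $G_1[A']$ and $G_2[A']$, which yields $\deg_{G_1[A']}(u_1)+\deg_{G_1[A']}(u_2)+\deg_{G_2[A']}(u_1)+\deg_{G_2[A']}(u_2)\le 2a+2$). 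Your sketch mentions Lemma~\ref{lemmaa3} only in passing for the ``short list of sub-cases'' and explicitly does not invoke it in the residual regime, relying instead on the elementary bound $\deg_{F_2}(u_1)+\deg_{F_2}(u_2)\le 2(a-1)$, which is insufficient. You would need to spell out which sub-case yields $4a+7$, which yields $3a+2b+5$, and show the calculation in each; as written, the proof of the hard half of the lemma is absent.
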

\begin{proof}
For convenience, let $s_i = \sum_{j=1}^2 \deg_{G_i}(u_j)+ \deg_{G_i}(v_1)$ for $i=1, 2, 3$ and $y= s_1 + s_2 + s_3$.
Below we show that $y\le \max\{4a+7,3a+2b+5\}$.

We first observe that if $\deg_{G_i}(v_1) \ge 3$ for some $i\in \{2, 3\}$, then $E(G_1)= \emptyset$ and $G_{i'}$ is a star centered at $v_1$, where $i'= 5- i$. Indeed, if $G_1$ or $G_{i'}$ contains an edge $e$ not incident to $v_1$, then $e$ is disjoint from some edge of $G_i$ that is incident to $v_1$ -- this contradicts our assumption. The observation implies that if $\deg_{G_i}(v_1)\ge 3$ for both $i=2, 3$, then $E(G_1)= \emptyset$ and both $G_2$ and $G_3$ are stars centered at $v_1$. In this case, $s_i\le a+2$ for $i=2, 3$
and thus $y\le 2(a+2)$. If $\deg_{G_2}(v_1)\ge 3$ and $\deg_{G_3}(v_1)\le 2$, then $E(G_1)= \emptyset$ and $G_3$ consists of at most two edges incident to $v_1$. In this case, $y \le 2(a+b-1)+ a + 4= 3a + 2b + 2$. The case when $\deg_{G_2}(v_1)\le 2$ and $\deg_{G_3}(v_1)\ge 3$ is analogous.
We thus assume that
\begin{align}\label{eq:v1}
\deg_{G_i}(v_1) \le 2 \quad \text{for } i=2, 3
\end{align}
for the rest of the proof.

Next, we observe that if $|N_{G_i}(u_j) \cap B| \ge 2 $ for some $i\in \{2, 3\}$ and some $j\in \{1, 2\}$, then $G_{i'}$ is a star centered at $u_j$ for $i'\in \{1, 2, 3\}\setminus \{i\}$. This is again due to our assumption on $G_1, G_2$ and $G_3$.
The observation implies that if $|N_{G_i}(u_j) \cap B| \ge 2 $ for both $j=1, 2$, then $E(G_{i'})\subseteq \{ u_1 u_2 \}$ and consequently, $s_{i'}\le 2$ for $i'\in \{1, 2, 3\}\setminus \{i\}$. By \eqref{eq:v1}, we have $s_i\le 2(a+b-1)+2$. Therefore, $y\le 2(a+b-1)+2+4= 2a+ 2b+4$. The observation also implies that if $|N_{G_i}(u_j) \cap B| \ge 2 $ for both $i=2, 3$, then $G_1, G_2, G_3$ are all stars centered at $u_j$. In this case, $s_1\le a$ and $s_i\le a+b+1$ for $i=2, 3$, which implies that $y\leq a+ 2(a+b+1) = 3a + 2b+2$. We now consider the case when $|N_{G_2}(u_1) \cap B| \ge 2 $, $|N_{G_2}(u_2) \cap B| \le 1 $, and $|N_{G_3}(u_1) \cap B| \le 1$. Thus $G_3$ is a star (centered at $u_1$) of size at most $a$, which yields $s_3\le a+2$. Now suppose $N_{G_2}(u_2) \cap B \subseteq \{v_p\}$ for some $p$. Let $A':= A\cup \{v_p\}$ (note that $|A'|= a+1\ge 4$). Since every edge of $G_1$ intersects every edge of $G_2$, we can apply Lemma~\ref{lemmaa3} to $G_1[A']$ and $G_2[A']$ and obtain that $\sum_{i=1}^2 \sum_{j=1}^2 \deg_{G_i[A']}(u_j) \le 2a+2$. Since $| N_{G_2}(u_1)\cap (B\setminus \{v_p\})|\le b-1$ and $\deg_{G_2}(v_1)\le 2$, it follows that $s_1 + s_2 \le 2a + 2 + b-1 + 2 = 2a + b+3$ and $y\le 2a + b + 3 + a+2 = 3a + b + 5$.

We thus assume that $|N_{G_i}(u_j) \cap B| \le 1 $ for $i=2, 3$ and $j=1, 2$. Suppose $N_{G_2}(u_2) \cap B \subseteq \{v_p\}$ for some $p$ and let $A':= A\cup \{v_p\}$. We apply Lemma~\ref{lemmaa3} to $G_1[A']$ and $G_2[A']$ and obtain that $\sum_{i=1}^2 \sum_{j=1}^2 \deg_{G_i[A']}(u_j) \le 2a+2$. Since $| N_{G_2}(u_1)\cap B|\le 1$ and $\deg_{G_2}(v_1)\le 2$, it follows that $s_1 + s_2 \le 2a + 2 + 1 + 2$. On the other hand, we have $s_3\le 2a+2$ because $\deg_{G_3}(u_j)\le a$ for $j=1, 2$ and $\deg_{G_3}(v_1)\le 2$. Thus $y\leq 2a+ 5 + 2a + 2 = 4a + 7$.
\end{proof}

\section{Proof of Lemma \ref{lemma7}}
The proof is similar to that of \cite[Lemma 4]{zhang}. 
Let $M$ be a largest matching of $H$ such that each edge of $M$ contains (exactly) one vertex of $W$. To the contrary, assume $|M| \leq 3s-|U|-1$.  Let $U_1 = V(M) \cap U$, $U_2 = U\setminus U_1$, $W_1 = V(M) \cap W$ and $W_2 =W\setminus W_1$. Since $|U|\ge 2s$, we have $|U_2|=|U|-2|M|\ge 2$. Since $|W_2|= |W| - |M|$ and $|W|\ge 3s - |U|$, it follows that $W_2\ne \emptyset$.

Below is a sketch of the proof.
We first assume $|U|< 2s + \eps' n$. In this case every vertex in $U$ is adjacent to some vertex in $W$.
If $|M|$ is not close to $s$, then we easily obtain a contradiction because $U_2$ is not small. When $|M|$ is close to $s$, we consider three vertices $u_1\ne u_2\in U_2$ and $v_0\in W_2$,  and derive a contradiction on $\deg(u_1) + \deg(u_2) + \deg(v_0)$. Next we assume $|U|\ge 2s + \eps' n$. In this case $U_2$ is not small. If no vertex of $W_2$ is adjacent to any vertex of $U_2$, then consider two adjacent vertices $v_0\in W_2$ and $u_0\in U_1$. We have $\deg(v_0)\le \binom{2|M|}{2}$, which eventually yields that $\deg(v_0) + \deg(u_0) < 2sn - \eps n^2$.
Now assume $v_0\in W_2$ is adjacent to some vertex $u_0\in U_2$. In this case we define $M'$ consisting of all $e \in M$ that contains a vertex $u'\in U$ such that $|N(v_0, u')\cap U_2|\ge 3$. We show that if $|M'|$ is small, then $\deg(v_0)$ is small; otherwise $\deg(u_0)$ is small. In either case we derive that $\deg(v_0) + \deg(u_0) < 2sn - \eps n^2$.

We now give the details of the proof.

\noindent{\bf Case 1. } $2s \leq |U|<2s+\varepsilon'n$.

In this case we have the following two claims.

\begin{claim}\label{claim16}
$|M| \geq s-\varepsilon'' n$.
\end{claim}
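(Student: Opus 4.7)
The plan is to argue by contradiction: suppose $|M| \le s - \varepsilon'' n - 1$. Under this assumption, $|U_2| = |U| - 2|M| \ge 2\varepsilon'' n$, and combining $|U| < 2s + \varepsilon' n$ with $s \le n/3$ and $\varepsilon' \ll \varepsilon''$ also gives $|W_2| = n - |U| - |M| \ge \varepsilon'' n / 2$. So both $U_2$ and $W_2$ have linear size, which is the leverage one aims to exploit.

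First I would verify the structural facts that are special to Case~1. (a) Every vertex $u \in U$ is adjacent (in $H$) to some vertex of $W$: otherwise every edge through $u$ lies inside $U$, so $\deg(u) \le \binom{|U|-1}{2} \le \binom{2s + \varepsilon' n - 1}{2}$, which under $s \le n/3$, $s > \tau n$, and $\varepsilon,\varepsilon' \ll \tau$ is strictly less than $sn - \varepsilon n^2/2$, contradicting $u \in U$. (b) For every $u \in U_2$ and $w \in W_2$ one has $N_H(u, w) \subseteq U_1$: any $z \in N_H(u, w) \cap U_2$ would yield an edge $\{u, z, w\}$ of $H$ with exactly one $W$-vertex that is disjoint from $V(M)$, contradicting the maximality of $M$, while $N_H(u, w) \cap W = \emptyset$ follows from the fact that no edge of $H$ contains two $W$-vertices.

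Using (b), for each $u \in U_2$ one can classify the edges through $u$ according to whether the other two vertices lie in $U$, in $(U \setminus\{u\}) \times W_1$, or in $U_1 \times W_2$, to conclude
\[
\deg(u) \le \binom{|U|-1}{2} + |M|(|U|-1) + 2|M|\,|W_2|.
\]
Combining this with $\deg(u) > sn - \varepsilon n^2/2$ and the bounds on $|U|$ and $|W_2|$ leads to a quadratic-type inequality in $|M|$. Since the single-vertex estimate alone only localizes $|M|$ in the interval between the two roots of $2m^2 - 2m(n-s) + s(n-2s) = 0$ --- an interval that in general extends well below $s - \varepsilon'' n$ --- I would strengthen the estimate by summing it over $u \in U_2$ and cross-checking the resulting lower bound on the count of $U_1U_2W_2$-type edges with the upper bound $\sum_{w \in W_2}\deg(w) \le |W_2|(sn - \varepsilon n^2/2)$ that comes from $w \in W$. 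The factor $|U_2| \ge 2\varepsilon'' n$ injected into the sum, together with the hierarchy $\varepsilon \ll \varepsilon' \ll \varepsilon''$ absorbing the error terms, should then force $|M| \ge s - \varepsilon'' n$.

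The main obstacle I anticipate is the last step: ensuring that the combined double count is genuinely tight enough to land on the sharp bound $|M| \ge s - \varepsilon'' n$ rather than a much weaker single-vertex-type bound of the form $|M| \ge c\,n$ for some small constant $c$. Executing this requires careful tracking of the leading-order terms and of the slack $\alpha \le \varepsilon' n$ in the expansion $|U| = 2s + \alpha$; the hierarchy $\varepsilon \ll \varepsilon' \ll \varepsilon''$ is arranged precisely so that the final cancellation goes through cleanly.
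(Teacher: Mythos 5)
Your plan has a genuine gap, and the double-count you propose to close it does not work. Your single-vertex estimate for $u\in U_2$, namely $\deg(u)\le \binom{|U|-1}{2}+|M|(|U|-1)+2|M||W_2|$, combined with $\deg(u)>sn-\varepsilon n^2/2$, only localizes $|M|$ between the two roots of (roughly) $2m^2-2m(n-s)+s(n-2s)=0$; as you correctly anticipate, the small root is far below $s-\varepsilon'' n$ (for $s\approx n/3$ it is about $\tfrac{2-\sqrt{2}}{6}n\approx 0.1 n$). The problem is that summing this lower bound over $u\in U_2$ and then comparing against any upper bound for the number of $U_1U_2W_2$-edges that is \emph{itself} a sum over $W_2$-vertices (whether you bound each summand by $|U_1||U_2|$, by $\binom{|U|}{2}-\binom{|U_2|}{2}$, or by $sn-\varepsilon n^2/2$) collapses, after dividing through by $|U_2|$, back to the very same single-vertex inequality, or to something even weaker. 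Working out the quadratic in each case gives the same critical value $m\approx\tfrac{2-\sqrt{2}}{6}n$ (or worse), so the double count does not push $|M|$ up to $s-\varepsilon'' n$. In short, the ``factor $|U_2|$ injected into the sum'' cancels and buys you nothing.

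The paper's proof avoids this by never isolating $\deg(u)$ for $u\in U_2$. Instead it fixes $v_0\in W_2$ and an adjacent $u\in U$ (not necessarily in $U_2$) and bounds the \emph{pair} via $\sigma_2$. The decisive estimate is on the $W_2$ side: since no edge is of type $U_2U_2W_2$ and $W$ is independent, $\deg(v_0)\le\binom{|U|}{2}-\binom{|U_2|}{2}$, and the subtracted term $\binom{|U_2|}{2}\ge\binom{2\varepsilon'' n}{2}$ is quadratic in $|U_2|$. Pairing this with the trivial bound $\deg(u)\le\binom{|U|-1}{2}+(|U|-1)|W|$ gives $\deg(v_0)+\deg(u)\le (n-1)(|U|-1)-\binom{|U_2|}{2}<(n-1)(2s+\varepsilon' n-1)-\binom{2\varepsilon'' n}{2}$, which is below $2sn-\varepsilon n^2$ precisely because $\varepsilon\ll\varepsilon'\ll\varepsilon''$. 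So the quadratic gain in $|U_2|$ lives in the bound on $\deg(v_0)$, a quantity you never estimate in your sketch; your facts (a) and (b) are correct but aimed at the wrong vertex.
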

\begin{proof} To the contrary, assume that $|M| < s-\varepsilon'' n$. Fix $v_0\in W_2$. Then $\deg(v_0) \leq \binom{|U|}{2} - \binom{|U_2|}2$ because there is no edge of type $U_2 U_2 W_2$. Since $v_0$ is not an isolated vertices, $v_0$ is adjacent to some vertex $u\in U$. Trivially $\deg(u) \leq \binom{|U|-1}{2} + (|U| - 1)|W|$. Thus
\begin{align*}
\deg(v_0) + \deg(u) &\le \binom{|U|-1}{2} + (|U| - 1)|W| + \binom{|U|}{2} - \binom{|U_2|}2 = (n-1)(|U| - 1) -  \binom{|U_2|}2.
\end{align*}
Since $|U| \geq 2s$ and $|M| < s-\varepsilon'' n$, it follows that $|U_2| =|U|-2|M| > 2\varepsilon'' n $. As a result,
\begin{eqnarray*}
\text{deg}(u)+\text{deg}(v_0)  \leq (n-1)(2s+\varepsilon' n-1)-\binom{2\varepsilon'' n}{2},
\end{eqnarray*}
which contradicts the condition that  $\text{deg}(u)+\text{deg}(v_0) > 2sn -\varepsilon n^2$ because $ \varepsilon  \ll \varepsilon'  \ll  \varepsilon'' $.
\end{proof}

\vskip.2cm

\begin{claim}\label{claim15}
Every vertex in $U$ is adjacent to one vertex in $W$.
\end{claim}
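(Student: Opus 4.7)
The plan is to prove Claim~\ref{claim15} by a short contradiction argument that uses only the definition of $U$ and the Case~1 hypothesis $|U|<2s+\varepsilon'n$. Suppose for contradiction that some $u\in U$ has no neighbor in $W$; equivalently, no edge of $H$ contains both $u$ and any vertex of $W$. Then every edge of $H$ incident to $u$ has its remaining two vertices in $U\setminus\{u\}$, so
\[
\deg(u)\le \binom{|U|-1}{2}.
\]

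I would then show this trivial upper bound already lies strictly below the threshold $sn-\tfrac{\varepsilon}{2}n^2$ that membership in $U$ demands, giving the contradiction. Using $|U|<2s+\varepsilon'n$ and $s\le n/3$,
\[
\binom{|U|-1}{2}\le \tfrac{1}{2}(2s+\varepsilon'n)^2 = 2s^2 + 2s\varepsilon'n + \tfrac{\varepsilon'^2}{2}n^2 \le \tfrac{2}{3}sn + 2\varepsilon' n^2,
\]
where I used $2s^2\le \tfrac{2}{3}sn$ from $s\le n/3$. Since $s>\tau n$, we have $sn-\tfrac{2}{3}sn>\tfrac{\tau}{3}n^2$, and the constant chain $\varepsilon\ll \varepsilon'\ll \tau$ from~\eqref{equ1} gives $\tfrac{\tau}{3}n^2 > 2\varepsilon' n^2 + \tfrac{\varepsilon}{2}n^2$. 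Combining these two bounds yields $\binom{|U|-1}{2} < sn - \tfrac{\varepsilon}{2}n^2$, contradicting $u\in U$.

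There is no real obstacle beyond bookkeeping the constants. Conceptually the only point is that the Case~1 hypothesis $|U|<2s+\varepsilon' n$ together with $s\le n/3$ pins $|U|$ close to $2n/3$ from above, so the pair count $\binom{|U|-1}{2}$, which is the maximum possible contribution to $\deg(u)$ from edges avoiding $W$, is by itself a constant factor below $sn$; once $\varepsilon,\varepsilon'$ are small relative to $\tau$, that gap swallows all error terms and forces every vertex of $U$ to be adjacent to some vertex of $W$, as claimed.
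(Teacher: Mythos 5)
Your argument is correct and follows the same route as the paper: assume $u\in U$ has no neighbor in $W$, bound $\deg(u)\le\binom{|U|-1}{2}<\binom{2s+\varepsilon'n}{2}$ using the Case~1 hypothesis, and show this falls below the $sn-\tfrac{\varepsilon}{2}n^2$ threshold via $s\le n/3$, $s>\tau n$ and $\varepsilon\ll\varepsilon'\ll\tau$. You merely spell out the constant bookkeeping that the paper leaves implicit.
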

\begin{proof} To the contrary, assume that $u\in U$ is not adjacent to any vertex in $W$. Then
\[\text{deg}(u) \leq \binom{|U|-1}{2} < \binom{2s+\varepsilon'n}{2},\]
which contradicts the condition that $\text{deg}(u) > sn-\frac{1}{2}\varepsilon n^2$ because $\tau n < s \leq n/3 $ and $ \varepsilon  \ll \varepsilon' \ll\tau$.
\end{proof}

Fix $u_1 \ne u_2 \in U_2$ and $v_0 \in W_2$.
Trivially $\deg(w) \leq \binom{|U|}{2}$ for any vertex $w \in W$ and $\deg(u) \leq \binom{|U|-1}{2}+|W|(|U|-1)$ for any vertex $u \in U$. Furthermore, for any two distinct edges $e_1, e_2\in M$, we observe that at least one triple of type $UUW$ with one vertex in $e_1$, one vertex in $e_2$ and one vertex in $\{u_1,u_2,v_0\}$ is \emph{not} an edge by the choice of $M$.
By Claim \ref{claim16}, $|M| \geq s - \varepsilon''n$. Thus,
\begin{align*}
\deg(u_1)+ \deg(u_2)+ \deg(v_0)\leq 2\left( \binom{|U|-1}{2}+|W|(U|-1)\right) +\binom{|U|}{2}-\binom{s - \varepsilon''n}{2}.
\end{align*}

On the other hand, Claim \ref{claim15} implies that $u_i$ is adjacent to some vertex in $W$ for $i=1,2$. We know that $v_0$ is adjacent to some vertex in $U$. Therefore, $\deg(u_i) > \left(2sn-\varepsilon n^2\right)-\binom{|U|}{2}$ for $i=1,2$, and $\deg(v_0) > \left(2sn-\varepsilon n^2\right)-\left(\binom{|U|-1}{2}+|W|(|U|-1)\right)$. It follows that
\begin{align*}
\deg(u_1)+ \deg(u_2)+\deg(v_0)> 3\left(2sn-\varepsilon n^2\right) - 2\binom{|U|}{2}- \binom{|U|-1}{2}-|W|(|U|-1).
\end{align*}
The upper and lower bounds for $\deg(u_1)+ \deg(u_2)+\deg(v_0)$ together imply that
\begin{align*}
3\left( \binom{|U|-1}{2}+|W|(|U|-1)+\binom{|U|}{2}\right)-\binom{s-\varepsilon''n}{2} &> 3\left(2sn-\varepsilon n^2\right), \\
\text{or} \quad (|U| - 1)(n-1) - \frac13 \binom{s - \varepsilon''n}{2} &> 2sn-\varepsilon n^2,
\end{align*}
which is impossible because $|U| < 2s + \varepsilon' n$, $\tau n < s \leq n/3$, and  $\varepsilon  \ll \varepsilon'  \ll \varepsilon'' \ll\tau$.

\noindent{\bf Case 2. } $2s + \varepsilon' n \leq |U| \leq 3s$.

We consider the following two subcases.

\noindent{\bf Subcase 2.1. } No vertex in $U_2$ is adjacent to any vertex in $W_2$.

Fix $v_0\in W_2$. Then $\deg(v_0) \leq \binom{|U_1|}{2}=\binom{2|M|}{2}$.
Since $v_0$ is not an isolated vertex, $v_0$ is adjacent to some vertex $u_0\in U_1$. 
We know that $\deg(u_0)\leq \binom{|U|-1}{2}+(|U|-1)|W|-|U_2||W_2|$ because no vertex in $U_2$ is adjacent to any vertex in $W_2$.
Since $|W|= n - |U|$, $|U_2|=|U|-2|M|$ and $|W_2|=n-|U|-|M|$, we derive that
\begin{align*}
\sigma_2(H)\le \deg(v_0)+\deg(u_0) &\leq\binom{2|M|}{2}+\binom{|U|-1}{2} +(|U|-1)(n-|U|)-(|U|-2|M|)(n-|U|-|M|) \\
&\le (2n-|U|)|M|+\frac{|U|^2}{2}.
\end{align*}
Since $|M| < 3s-|U|$, it follows that 
\begin{align*}
\sigma_2(H) &< (2n-|U|)(3s-|U|)+\frac{|U|^2}{2} = 6sn-(3s+2n)|U|+\frac{3}{2} |U|^2.
\end{align*}
Note that the quadratic function $\frac 32 x^2 - (3s+2n)x$ is minimized at $x= s+ \frac 23 n$. Since 
$2s+\varepsilon'n \leq |U| \leq 3s\le s+ \frac 23 n$, we derive that
\begin{align*}
\sigma_2(H) &\le 6sn-(3s+2n)(2s+\varepsilon'n)+\frac{3}{2} (2s+\varepsilon'n)^2 \\
&= 2sn-2\varepsilon'n^2+3s \varepsilon' n+\frac{3}{2}\varepsilon'^2n^2   \le  2sn-\varepsilon' n^2 + \frac{3}{2}\varepsilon'^2n^2
\end{align*}
because $s \leq n/3$. Since $\varepsilon \ll \varepsilon' $, this contradicts the assumption that $\sigma_2(H)> 2sn - \varepsilon n$. 

\noindent{\bf Subcase 2.2. } Two vertices $u_0\in U_2$ and $v_0\in W_2$ are adjacent.

Let $M' = \{e \in M: \exists \, u'\in e, |N(v_0,u')\cap U_2| \geq 3\}$. Assume $\{u_1,u_2,v_1\} \in M'$ such that $u_1,u_2 \in U_1$, $v_1 \in W_1$ and $|N(v_0,u_1)\cap U_2|\geq 3$. We claim that
\begin{equation}
\label{eq:Nu0}
N(u_0,v_1)\cap U_2  = \emptyset.
\end{equation}
Indeed, if $\{u_0, v_1, u_3\} \in E(H)$ for some $u_3\in U_2$, then we can find $u_4\in U_2\setminus \{u_0, u_3\}$ such that $\{v_0, u_1, u_4\}\in E(H)$. Replacing $\{u_1,u_2,v_1\}$ by $\{u_0, v_1, u_3\}$ and $\{v_0, u_1, u_4\}$
gives a larger matching than $M$, a contradiction.

By the definition of $M'$, we have
\[
\deg(v_0) \leq \binom{|U_1|}{2}+2|M'||U_2|+2(|U_1|-2|M'|) =  \binom{|U_1|}{2} + 2|U_1| + |M'| (2|U_2|-4).
\]
By \eqref{eq:Nu0}, we have
\begin{align*}
\deg(u_0) &\leq \binom{|U|-1}{2}+|U_1||W|+(|U_2|-1)(|W_1|-|M'|)
\end{align*}
and consequently
\[
\deg(v_0)+\deg(u_0) \le \binom{|U_1|}{2} + \binom{|U|-1}{2}+|U_1|(|W|+2) + (|U_2|-1)|W_1| + |M'| ( |U_2|-3).
\]
Since $|M'| \leq |M| =|W_1|=\frac{|U_1|}{2}$, it follows that
\begin{align*}
 \deg(v_0)+\deg(u_0) & \le \binom{|U_1|}{2} + \binom{|U|-1}{2}+|U_1|(|W|+2) + (|U_2|-1) \frac{|U_1|}{2} + \frac{|U_1|}{2} (|U_2|-3)\\
& = \binom{|U|}{2}-\binom{|U_2|}{2}+\binom{|U|-1}{2}+|U_1||W| = \left(|U|-1\right)^2-\binom{|U_2|}{2}+2|M|\left(n-|U|\right).
\end{align*}
Since $|M| \leq 3s-|U|$ and $|U_2|=|U|-2|M| \ge 3|U|-6s$, we have 
\begin{eqnarray*}
\text{deg}(v_0)+\text{deg}(u_0) & \leq & (|U| - 1)^2-\binom{3|U|-6s}{2}+2(3s-|U|)\left(n-|U|\right)\\
& = & -\frac{3}{2}|U|^2+\left(12s-2n-\frac{1}{2} \right)|U|+6sn-18s^2-3s+1 \\
& \leq & -\frac{3}{2}|U|^2+\left(12s-2n\right)|U|+6sn-18s^2.
\end{eqnarray*}
Note that the quadratic function $-\frac 32 x^2 + (12s - 2n) x$ is maximized at $x= 4s - \frac 23 n$. Since $3s\ge |U| \ge 2s + \varepsilon'n \geq 4s-\frac{2}{3}n$, we have
\begin{eqnarray*}
\sigma_2(H)\le\text{deg}(v_0)+\text{deg}(u_0)&\leq &  -\frac{3}{2}(2s + \varepsilon'n)^2+\left(12s-2n\right)(2s + \varepsilon'n)+6sn-18s^2\\
& = & 2sn-2\varepsilon' n^2+6\varepsilon' sn-\frac{3}{2}\varepsilon'^2  n^2 \le  2sn- \frac{3}{2}{\varepsilon'}^2 n^2
\end{eqnarray*}
because $s \leq n/3$. Since $\varepsilon \ll \varepsilon' $, this contradicts the assumption that $\sigma_2(H)> 2sn - \varepsilon n$.  

\section{Proof of Theorem \ref{the2}}
Suppose $H$ is a $3$-graph of order $n$ without isolated vertex and $\sigma_2(H) > 2sn -\varepsilon n^2$.
Let $U = \{u\in V(H): \deg(u) > sn- \varepsilon n^2/2 \}$ and $W = V\setminus U$.  We know that no two vertices in $W$ are adjacent and $|U| \geq 2s$. Let $M$ be an optimal matching as in Definition~\ref{def:M}.
By Lemma \ref{lemma7}, such $M$ exists. 
Let $M_2 = M\setminus M_1$, $U_1 = V(M_1) \cap U$, $U_2 = V(M_2)$,  $U_3 = U\setminus V(M)$, $W_1 = V(M_1) \cap W$ and $W_2 =W\setminus W_1$.  
Since $M$ is optimal, no edge of $H$ is of type $W_2 U_3 U_3$ or $W_2 U_2 U_3$. In addition, for any $e\in M_1$, there are no two disjoint edges $e_1, e_2\in e\cup W_2\cup U_3$ such that $(e_1\cup e_2)\cap W_2 \ne \emptyset$.

Suppose to the contrary, that $|M| \leq s-1$.  We know that $|U_3|=|U|+|M_1|-3|M| \geq 3+|M_1|-(3s-|U|)\ge 3$. Let $u_1,u_2,u_3\in U_3$. Since $u_i \in U$ for $i=1,2,3,$ we have
\begin{align}\sum_{i=1}^{3}\deg(u_i) > 3sn-\frac{3}{2}\varepsilon n^2.\label{0001}\end{align}
On the other hand, if $u_1$ is adjacent to some $v_1\in W_2$, then
\begin{align}
\sum_{i=1}^{2}\deg(u_i)+\deg(v_1)\ge \sigma_2(H)+ \deg(u_2) > 3sn-\frac{3}{2}\varepsilon n^2.\label{211}
\end{align}


\vskip.2cm
\begin{claim}\label{claim17}
 For any two distinct edges $e_1$, $e_2$ from $M$, we have $\sum_{i=1}^3|L_{u_i}(e_1,e_2)| \leq 18$ and $\sum_{i=1}^2|L_{u_i}(e_1,e_2)|$ $+|L_{v_1}(e_1,e_2)| \leq 18.$
\end{claim}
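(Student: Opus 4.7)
The plan is to prove both inequalities of Claim~\ref{claim17} uniformly by applying Lemma~\ref{lemma1} to a $3$-balanced $3$-partite $3$-graph. Given distinct $e_1,e_2\in M$, I set $Z$ to be either $\{u_1,u_2,u_3\}$ (for the first inequality) or $\{u_1,u_2,v_1\}$ (for the second), and let $F$ be the $3$-partite $3$-graph on parts $Z,e_1,e_2$ whose edges are the triples $\{z,x,y\}\in E(H)$ with $z\in Z$, $x\in e_1$, $y\in e_2$. Each part has size $3$, and $|E(F)|=\sum_{z\in Z}|L_z(e_1,e_2)|$ is precisely the sum I want to bound. Lemma~\ref{lemma1} with $n=k=s=3$ gives $|E(F)|\le(3-1)\cdot 3^2=18$ as soon as $F$ contains no three pairwise disjoint edges, so it suffices to verify this combinatorial property.

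Suppose toward a contradiction that $f_1,f_2,f_3\in E(F)$ are pairwise disjoint. Then they form a perfect matching of $F$, so each $f_i$ takes exactly one vertex from each of $Z$, $e_1$, $e_2$. I then define the augmented matching
\[
M':=(M\setminus\{e_1,e_2\})\cup\{f_1,f_2,f_3\}.
\]
Since $u_1,u_2,u_3\in U_3=U\setminus V(M)$ and $v_1\in W_2\subseteq V\setminus V(M)$, the new edges are vertex-disjoint from $M\setminus\{e_1,e_2\}$; together with their pairwise disjointness, this shows $M'$ is a matching with $|M'|=|M|+1$.

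The main delicate point is to check that $M'$ still satisfies condition~(i) of Definition~\ref{def:M}, i.e.\ $|M_1'|\ge 3s-|U|$. Let $b:=|\{e_1,e_2\}\cap M_1|$, so the removal of $e_1,e_2$ strips exactly $b$ edges from $M_1$. The crucial ingredient is that no two vertices of $W$ are adjacent in $H$, so every edge of $H$, and in particular every $f_i$, meets $W$ in at most one vertex. It follows that the $b$ distinct $W$-vertices in $e_1\cup e_2$ are distributed injectively among $f_1,f_2,f_3$, and if $v_1\in Z$ the $f_i$ containing $v_1$ cannot absorb any of them, so it contributes a further $M_1'$-edge on its own. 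Hence the number of new edges meeting $W$ is $b$ in the first case and $b+1$ in the second, giving $|M_1'|\ge|M_1|\ge 3s-|U|$.

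Thus $M'$ is a matching satisfying~(i) with $|M'|>|M|$, contradicting the maximality~(ii) of the optimal matching $M$. The main obstacle of the plan is precisely this bookkeeping: once the non-adjacency of $W$-vertices is exploited to route distinct $W$-vertices to distinct $f_i$'s, Lemma~\ref{lemma1} delivers the desired bound of $18$ immediately.
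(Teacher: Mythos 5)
Your proof is correct and follows the same route as the paper: construct the $3$-balanced $3$-partite $3$-graph on parts $e_1$, $e_2$, $Z$, argue it has no perfect matching, and invoke Lemma~\ref{lemma1}. The paper dismisses the ``no perfect matching'' step with the phrase ``by the choice of $M$''; your contribution is to spell out the actual verification, namely that the augmented matching $M'=(M\setminus\{e_1,e_2\})\cup\{f_1,f_2,f_3\}$ still satisfies condition~(i) of Definition~\ref{def:M} because each $f_i$ meets $W$ in at most one vertex (no two $W$-vertices are adjacent), so $|M_1'|\ge|M_1|\ge 3s-|U|$, whence $|M'|>|M|$ contradicts optimality~(ii). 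This bookkeeping is accurate and fills in a genuine gap in the paper's terse exposition, without altering the underlying method.
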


\begin{proof} Let $H_1$ be the 3-partite subgraph of $H$ induced on three parts $e_1$, $e_2$, and $\{u_1,u_2,u_3\}$.
We observe that $H_1$ does not contain a perfect matching by the choice of $M$. By Lemma \ref{lemma1}, we obtain that $|E(H_1)| =\sum_{i=1}^3|L_{u_i}(e_1,e_2)| \leq 18$. Similarly, we have
$\sum_{i=1}^2|L_{u_i}(e_1,e_2)|+|L_{v_1}(e_1,e_2)| \leq 18.$
\end{proof}

We proceed in two cases.

\vskip.2cm
{\noindent \bf Case 1.} $|M_1| = 3s-|U|$.
\vskip.2cm
In this case, we have $|M_2|=|M|+|U|-3s$, $|U_3| = 3s-3|M|$ and $|W_2|=n-3s$. 

\begin{claim}\label{claim18}
 For any $e \in M_1$, we have

(i) $
\sum_{i=1}^2|L_{u_i}(e,U_3\cup W_2)|+|L_{v_1}(e,U_3\cup W_2)| \leq \max\{4|U_3|+7,3|U_3|+2|W_2|+5\}
$,where $v_1 \in W_2$;

(ii) $\sum_{i=1}^3|L_{u_i}(e,U_3 )| \leq 6|U_3|$.
\end{claim}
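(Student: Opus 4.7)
\textbf{Proof plan for Claim \ref{claim18}.}
Write $e = \{a, b, w\}$ with $a, b \in U_1$ and $w \in W_1$. The strategy for both parts is to re-express the sum of link sizes as a sum of degrees in three auxiliary graphs (the links of $a$, $b$, and $w$) restricted to the ambient vertex set, and then invoke Lemma~\ref{Lemma4} for (i) and Lemma~\ref{lemmaa2} for (ii).

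Concretely, for a set $Z \subseteq V(H) \setminus e$ and each $x \in e$, let $G_x^Z$ denote the graph on $Z$ with edge $yy'$ iff $\{x, y, y'\} \in E(H)$. Since $|L_v(e, Z)| = \sum_{x \in e} \deg_{G_x^Z}(v)$ for every $v \in Z$, the left-hand side of (i) equals
\[
\sum_{x \in \{a,b,w\}} \bigl(\deg_{G_x^{U_3 \cup W_2}}(u_1) + \deg_{G_x^{U_3 \cup W_2}}(u_2) + \deg_{G_x^{U_3 \cup W_2}}(v_1)\bigr),
\]
and the left-hand side of (ii) equals
\[
\sum_{x \in \{a,b,w\}} \bigl(\deg_{G_x^{U_3}}(u_1) + \deg_{G_x^{U_3}}(u_2) + \deg_{G_x^{U_3}}(u_3)\bigr).
\]

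For (i), set $A = U_3$, $B = W_2$ (so $|A| \ge 3$ and $|B| \ge 1$), and take $(G_1, G_2, G_3) = (G_w^{U_3\cup W_2}, G_a^{U_3\cup W_2}, G_b^{U_3\cup W_2})$ in Lemma~\ref{Lemma4}. Its two structural hypotheses are automatic from $W$ being pairwise non-adjacent: every vertex of $B$ is isolated in $G_1$ because $w \in W$, and no edge of $G_2$ or $G_3$ lies entirely in $B$ because that would force a hyperedge with two $W$-vertices. Its two disjointness hypotheses follow from the optimality of $M$: if two disjoint graph edges violated either, the two corresponding disjoint hyperedges in $H$ would together cover $w$ or a vertex of $W_2$, so replacing $e \in M_1$ by them yields a matching $M'$ with $|M'| = |M| + 1$ and $|M'_1| \ge |M_1|$, contradicting Definition~\ref{def:M}(ii). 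Lemma~\ref{Lemma4} then gives exactly the bound $\max\{4|U_3|+7,\, 3|U_3| + 2|W_2| + 5\}$.

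For (ii), apply Lemma~\ref{lemmaa2} on $V = U_3$ with $(G_1, G_2, G_3) = (G_w^{U_3}, G_a^{U_3}, G_b^{U_3})$ and $A = \{u_1, u_2, u_3\}$. The single hypothesis of that lemma --- every edge of $G_1$ intersects every edge of $G_2$ or $G_3$ --- is verified by the same optimality argument: two disjoint graph edges, one in $G_w^{U_3}$ and one in $G_a^{U_3}$ or $G_b^{U_3}$, would produce two disjoint hyperedges of $H$, one of which contains $w$, and replacing $e$ by them contradicts Definition~\ref{def:M}(ii). This yields $\sum_{i=1}^3 |L_{u_i}(e, U_3)| \le 6(|U_3| - 1) \le 6|U_3|$ when $|U_3| \ge 4$; the edge case $|U_3| = 3$ is handled by the trivial bound $|L_{u_i}(e, U_3)| \le 3 \cdot 2 = 6$, which sums to $18 = 6|U_3|$.

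The main thing to be careful about is that each matching enlargement still satisfies clause (i) of Definition~\ref{def:M}, so that the contradiction is with clause (ii) rather than (iii). This is guaranteed because the $w$- or $W_2$-containing hyperedge we introduce lies in $M'_1$ and offsets the loss of $e \in M_1$, ensuring $|M'_1| \ge |M_1| = 3s - |U|$.
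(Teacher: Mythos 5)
Your proof is correct and follows essentially the same route as the paper's: for (i) you apply Lemma~\ref{Lemma4} with $A=U_3$, $B=W_2$, $G_1$ the link of the $W_1$-vertex of $e$, and $G_2, G_3$ the links of the two $U_1$-vertices; for (ii) you apply Lemma~\ref{lemmaa2} with the same role for $G_1$, and your verification of the hypotheses via optimality clause (ii) (with the $W$-containing replacement edge preserving clause (i)) matches the paper's reasoning. The only cosmetic difference is in (ii): the paper pads the vertex set with one isolated vertex $u^*$ so that Lemma~\ref{lemmaa2} applies uniformly (giving $6(|U_3|+1-1)=6|U_3|$), whereas you apply it directly on $U_3$ and handle $|U_3|=3$ by the trivial bound $|L_{u_i}(e,U_3)|\le 6$; both yield the stated inequality.
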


\vskip.2cm

\begin{proof} Assume $e =\{u_1', u_2', u_3'\} \in M_1$ with $u_1' \in W_1$ and $u_2', u_3'\in U_1$.

(i) Let $A=U_3$, $B=W_2$, and $E(G_i)=L_{u_i'}(U_3 \cup W_2)$ for $i=1, 2, 3$. 
By the choice of $M$, there are not two disjoint edges, one from $G_1$ and the other from $G_2$ or $G_3$; or  one from $G_2$ and the other from $G_3$, and at least one of them contains one vertex from $B$. Furthermore, it is easy to see that
\[\sum_{i=1}^2|L_{u_i}(e,U_3\cup W_2)|+|L_{v_1}(e,U_3\cup W_2)| = \sum_{i=1}^3\left(\sum_{j=1}^2 \deg_{G_i}(u_j)+ \deg_{G_i}(v_1)\right).\]
The desired inequality thus follows from Lemma \ref{Lemma4}.

(ii) For $i=1,2,3$, let $G_i$ be the graph obtained from $L_{u_i'}(U_3)$ after adding an isolated vertex $u^*$.  Then $|V(G_i)|=|U_3|+1 \geq 4$. By the choice of $M$, every edge of $G_1$ intersects every edge of $G_2$ and $G_3$. The desired inequality thus follows from Lemma \ref{lemmaa2}.
\end{proof}

\vskip.2cm

\begin{claim}\label{claim19} For any $e \in M_2$, we have

(i) $ \sum_{i=1}^3|L_{u_i}(e,U_3)| \leq 3(|U_3|+3)$;

(ii) $ \sum_{i=1}^2|L_{u_i}(e,U_3)| \leq  3(|U_3|+1)$.


\end{claim}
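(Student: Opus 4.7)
The plan is to adapt the argument used for Claim~\ref{claim18}(ii), exploiting the key difference that $e\in M_2$ means $e\subseteq V(M_2)\subseteq U$. Because any two replacement edges would be disjoint from $W$, swapping $e$ for two disjoint edges contained in $e\cup U_3$ would produce a matching of size $|M|+1$ with the same submatching $M_1$, contradicting clause (ii) of Definition~\ref{def:M}. Consequently, any two edges of $H$ lying inside $e\cup U_3$ must intersect -- a full cross-intersecting condition, stronger than the one available in Claim~\ref{claim18}.

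Write $e=\{u_1', u_2', u_3'\}$ and, for each $p\in\{1,2,3\}$, let $G_p$ be the graph whose vertex set is $U_3$ together with a few auxiliary isolated vertices (the number to be chosen below), with edge set $\{\{x,y\}\subseteq U_3 : \{u_p', x, y\}\in E(H)\}$. Then $\deg_{G_p}(u_k)=|L_{u_k}(\{u_p'\}, U_3)|$, so
\[
\sum_{p=1}^{3}\deg_{G_p}(u_k)=|L_{u_k}(e, U_3)|.
\]
First I would check that for $p\ne q$ every edge of $G_p$ intersects every edge of $G_q$: if $\{x,y\}\in E(G_p)$ and $\{x',y'\}\in E(G_q)$ were disjoint, then the triples $\{u_p', x, y\}$ and $\{u_q', x', y'\}$ would also be disjoint (using $u_p'\ne u_q'$ and $u_p', u_q'\in U\setminus U_3$), producing two disjoint edges of $H[e\cup U_3]$, which is forbidden.

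With the intersecting property in hand, part (i) follows by padding $V(G_p)$ with two extra isolated vertices so that $|V(G_p)|=|U_3|+2\ge 5$ (recall $|U_3|\ge 3$) and applying Lemma~\ref{lemma3} to $A=\{u_1, u_2, u_3\}$:
\[
\sum_{i=1}^{3}|L_{u_i}(e, U_3)|=\sum_{p=1}^{3}\sum_{k=1}^{3}\deg_{G_p}(u_k)\le 3\bigl((|U_3|+2)+1\bigr)=3(|U_3|+3).
\]
For (ii), padding instead with a single isolated vertex so $|V(G_p)|=|U_3|+1\ge 4$ and applying Lemma~\ref{lemmaa3} (with $k=3$) to $A=\{u_1, u_2\}$ gives
\[
\sum_{i=1}^{2}|L_{u_i}(e, U_3)|=\sum_{p=1}^{3}\sum_{k=1}^{2}\deg_{G_p}(u_k)\le 3(|U_3|+1).
\]

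The only nonroutine step is verifying the cross-intersecting property, which hinges on carefully checking that the two corresponding 3-uniform triples are themselves disjoint as edges of $H[e\cup U_3]$; after that, tuning the number of auxiliary isolated vertices is the sole thing needed so that the size hypotheses of Lemmas~\ref{lemma3} and~\ref{lemmaa3} are met while the right-hand sides come out to exactly the stated values.
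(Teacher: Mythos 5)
Your proof is correct and takes essentially the same approach as the paper's: form the link graphs $L_{u_p'}(U_3)$ for $p=1,2,3$, pad with two (resp.\ one) isolated vertices so that Lemma~\ref{lemma3} (resp.\ Lemma~\ref{lemmaa3}) applies, and use the optimality of $M$ (clause (ii) of Definition~\ref{def:M}, noting that $e\in M_2$ is disjoint from $W$ so the swap preserves $M_1$) to obtain the required pairwise cross-intersecting property. The only difference is that you spell out the cross-intersecting verification that the paper leaves implicit; the argument and the invocation of the two lemmas are identical.
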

\vskip.2cm

\begin{proof} Assume $e =\{u_1',u_2',u_3'\} \in M_2$ with $u_1' ,u_2',u_3'\in U_2$.

(i) For $i=1,2,3$, let $G_i$ be the graph obtained from $L_{u_i'}(U_3)$ after adding two isolated vertices $u'$ and  $u''$.  Then $|V(G_i)|= |U_3|+2 \geq 5$. Since $M$ is optimal, the desired inequality follows from Lemma \ref{lemma3}.

(ii) For $i=1,2,3$, let $G_i$ be the graph obtained from $L_{u_i'}(U_3)$ after adding an isolated vertex $u^*$. Then $|V(G_i)| =|U_3|+1 \geq 4$.  Since $M$ is optimal, the desired inequality follows from Lemma \ref{lemmaa3}.
\end{proof}

\vskip.2cm
\begin{claim}\label{claim21} $s > n/3-\eta_1n$.
\end{claim}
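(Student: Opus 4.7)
The plan is a proof by contradiction: suppose $s \leq n/3 - \eta_1 n$, so $|W_2| = n - 3s \geq 3\eta_1 n$, and recall $|U_3| = 3s - 3|M| \geq 3$, so we may fix $u_1, u_2, u_3 \in U_3$. A preliminary observation used throughout is that the optimality of $M$ forbids every edge of type $U_3 U_3 U_3$ (otherwise adding such an edge would increase $|M|$ and contradict Definition~\ref{def:M}(ii)); together with the already-recorded absence of edges of types $W_2 U_3 U_3$, $W_2 U_2 U_3$, and those containing two vertices of $W$, this severely restricts which edges can meet $U_3 \cup W_2$. I would split into Case A (no vertex of $U_3$ is adjacent to any vertex of $W_2$) and Case B (some $u_1 \in U_3$ is adjacent to some $v_1 \in W_2$), contradicting (\ref{0001}) and (\ref{211}) respectively.

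In Case A, every edge through $u_i \in U_3$ has its remaining two vertices in $V(M) \cup (U_3 \setminus \{u_i\})$. Partitioning $\sum_{i=1}^{3}\deg(u_i)$ by how many of those vertices lie in $V(M)$, the trivial bound $|L_{u_i}(e)| \leq 3$ together with Claim~\ref{claim17} controls the $V(M)V(M)$-part by $9|M| + 18\binom{|M|}{2} = 9|M|^2$, while Claim~\ref{claim18}(ii) and Claim~\ref{claim19}(i) control the $V(M)$-to-$U_3$ part by $6|U_3|\,|M_1| + 3(|U_3|+3)|M_2|$. Writing $|U_3| = 3(s - |M|)$ and optimizing over $|M_1| \in [0,|M|]$ and $|M| \in [0, s-1]$, the sum is at most $9|M|(2s - |M|) \leq 9(s^2 - 1)$. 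Then $3sn - 9(s^2 - 1) = 3s(n - 3s) + 9 \geq 9\tau\eta_1 n^2 > \tfrac{3}{2}\varepsilon n^2$ because $\varepsilon \ll \tau\eta_1$, contradicting (\ref{0001}).

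In Case B, pick $u_2 \in U_3 \setminus \{u_1\}$ and bound $\sum_{i=1}^{2}\deg(u_i) + \deg(v_1)$. The new structural inputs are that $|L_{v_1}(e, U_3 \cup W_2)| = 0$ and $|L_{u_i}(e, W_2)| = 0$ for every $e \in M_2$ and $i = 1, 2$, both following from the forbidden edge types $W_2 U_2 U_3$ and $W_2 W_2 U_2$; as in Case A, edges with all three vertices in $U_3 \cup W_2$ are also excluded. Claim~\ref{claim17} (second inequality) bounds the $V(M)V(M)$-part by $9|M|^2$; Claim~\ref{claim18}(i) bounds the contribution from $e \in M_1$ by $|M_1| \cdot \max\{4|U_3|+7,\,3|U_3|+2|W_2|+5\} \leq |M_1|(4|U_3|+2|W_2|+7)$; and Claim~\ref{claim19}(ii), combined with the vanishing links, bounds the contribution from $e \in M_2$ by $3|M_2|(|U_3|+1)$. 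Substituting $|U_3| = 3(s-|M|)$ and $|W_2| = n-3s$ and optimizing over feasible $(|M|, |M_1|, |M_2|)$ with $|M_1| + |M_2| = |M| \leq s-1$ yields the bound $(s-1)(3s + 2n + 10)$; hence $3sn - (s-1)(3s+2n+10) = s(n-3s) - 7s + 2n + 10 \geq 3\tau\eta_1 n^2 - n/3 > \tfrac{3}{2}\varepsilon n^2$, contradicting (\ref{211}). The main obstacle is the Case B optimization: one must correctly catalogue which edge types are forbidden to justify each vanishing link, and then handle the $\max\{\cdot,\cdot\}$ inside Claim~\ref{claim18}(i) carefully enough that the coefficient of $|W_2|$ in the resulting upper bound gets absorbed by the slack $s(n-3s) \geq 3\tau\eta_1 n^2$ coming from our assumption $s \leq n/3 - \eta_1 n$.
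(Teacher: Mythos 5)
Your proposal is correct and follows essentially the same path as the paper: assume $s \le n/3 - \eta_1 n$, split on whether some $u_i$ is adjacent to $W_2$, apply Claims~\ref{claim17}, \ref{claim18}, \ref{claim19}, and contradict~\eqref{0001} or~\eqref{211}. The one cosmetic difference is that you avoid the paper's sub-split of Case~B (on whether $|U_3|\ge 2|W_2|-2$) by bounding $\max\{4|U_3|+7,\,3|U_3|+2|W_2|+5\}\le 4|U_3|+2|W_2|+7$ and then maximizing over $(|M|,|M_1|)$ -- a cruder but sufficient estimate given the slack $s(n-3s)\ge 3\tau\eta_1 n^2$.
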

\vskip.2cm
\begin{proof} Suppose $s \leq n/3-\eta_1n$. We first consider the case that $u_1,u_2,u_3$ are not adjacent to any vertex of $W_2$.

Following Claim \ref{claim17},  we have
\begin{align}
\sum_{i=1}^{3}\deg(u_i) &\leq 18\binom{|M|}{2}+9|M|+\sum_{i=1}^{3}|L_{u_i}(V(M_1),U_3)|+\sum_{i=1}^{3}|L_{u_i}(V(M_2),U_3)|. \label{2000}
\end{align}

Furthermore, by Claims \ref{claim18} (ii) and \ref{claim19} (i), we
obtain that
\begin{align*}
\sum_{i=1}^{3}\deg(u_i)
& \leq 18\binom{|M|}{2}+9|M|+6|M_1||U_3|+3|M_2|(|U_3|+3)\nonumber\\
 & = 18\binom{|M|}{2}+9|M|+6\left(3s-|U|\right)(3s-3|M|)+3(|M|+|U|-3s)(3s-3|M|+3) \nonumber\\
& = (9|U|-18s+9)|M|+ (3s- |U|)(9s- 9). 
\end{align*}

Since $|M| \leq s-1$, it follows that 
\begin{align*}
\sum_{i=1}^{3}\deg(u_i) & \leq (9|U|-18s+9)(s-1)+ (3s- |U|)(9s- 9)=  9s^2-9.
\end{align*}
Since $\tau n < s \leq n/3-\eta_1 n$ and $\eta_1 < \tau$, we know that
\begin{align}
3s^2-sn = s(3s- n) \leq  \max\left\{-\eta_1 n ( n - 3\eta_1 n), -\tau n (n - 3 \tau n)  \right\} = -\eta_1 n ( n - 3\eta_1 n). 
\label{008}
\end{align}
Consequently, $\sum_{i=1}^{3}\deg(u_i) < 9 s^2 \le 3sn - 3\eta_1 n ( n - 3\eta_1 n) $. Since
$\varepsilon \ll \eta_1$, this contradicts \eqref{0001}.


\medskip
Now we assume, without loss of generality, that $u_1$ is adjacent to $v_1$.
The choice of $M$ implies that  $L_v(e,U_3)=L_u(e,W_2)=\emptyset$ for any $v\in W_2$, $u\in U_3$ and $e\in M_2$. By Claim \ref{claim17}, we have
\begin{align}
\sum_{i=1}^2\deg(u_i)+\deg(v_1) &\leq 18\binom{|M|}{2}+9|M|+\sum_{i=1}^2|L_{u_i}(V(M_1),U_3\cup W_2)|+|L_{v_1}(V(M_1),U_3)|\nonumber\\
& \,\,\quad +\sum_{i=1}^2|L_{u_i}(V(M_2),U_3)|.\label{21000}
\end{align}
We know that $4|U_3|+7 \geq 3|U_3|+2|W_2|+5$ if and only if $|U_3| \geq 2|W_2|-2$.
If $|U_3| \geq 2|W_2|-2$, then by \eqref{21000}, Claim \ref{claim18} (i) and Claim \ref{claim19} (ii), we have
\begin{align*}
\sum_{i=1}^2\deg(u_i)+\deg(v_1)
& \leq 18\binom{|M|}{2}+9|M|+|M_1|(4|U_3|+7)+3|M_2|(|U_3|+1) \\
 & = 18\binom{|M|}{2}+9|M|+(3s-|U|)(4(3s-3|M|)+7)+3(|M|+|U|-3s)(3s-3|M|+1) \\
& = (3|U|+3)|M|-3s|U|-4|U|+9s^2+12s.
\end{align*}
Since $|M| \leq s-1$ and $|U| \geq 2s$, it follows that
\begin{align*}
\sum_{i=1}^2\deg(u_i)+\deg(v_1) & \leq (3|U|+3)(s-1)-3s|U|-4|U|+9s^2+12s\\
& =-7|U|+9s^2+15s-3 \leq 9s^2+s-3.
\end{align*}
Following \eqref{008}, we have $\sum_{i=1}^2\deg(u_i)+\deg(v_1) <  3sn - 3\eta_1 n ( n - 3\eta_1 n) + n/3 - 3$.
Since $\varepsilon \ll \eta_1$ and $n$ is sufficiently large, this contradicts \eqref{211}.

If $|U_3| <  2|W_2|-2$,  by \eqref{21000}, Claim \ref{claim18} (i) and Claim \ref{claim19} (ii), we have
\begin{align*}
\sum_{i=1}^2\deg(u_i)+\deg(v_1)
 &\leq  18\binom{|M|}{2}+9|M|+|M_1|\left(3|U_3|+2|W_2|+5\right)+3|M_2|(|U_3|+1) \\
 & = (9s+3)|M|+(-2n+6s-2)|U|+6sn-18s^2+6s.
\end{align*}
Since $|M| \leq s-1$ and $|U| \geq 2s$, it follows that
\begin{align*}
\sum_{i=1}^{2}\deg(u_i)+\deg(v_1) & \leq (9s+3)(s-1)+(-2n+6s-2)(2s)+6sn-18s^2+6s \\
& = 2sn+3s^2-4s-3.
\end{align*}
Applying \eqref{008}, we have $\sum_{i=1}^2\deg(u_i)+\deg(v_1) < 3sn - \eta_1 n ( n - 3\eta_1 n)$, which contradicts \eqref{211} because $\varepsilon \ll \eta_1$. 
\end{proof}

By Claim \ref{claim21}, we have $|W_2| = n-3s < 3\eta_1 n$. Let $H'= H[V\setminus W_2]$. We claim that $\sigma_2(H') > 2n^2/3-\eta_2 n^2$. Indeed, recall that $\text{deg}_H(u)+\text{deg}_H(v) \geq 2n^2/3-\varepsilon n^2$ for any two adjacent vertices $u$ and $v$ of $H'$. Since $|W_2| < 3\eta_1 n$ and $\varepsilon \ll \eta_1 \ll \eta_2$, it follows that
\[
\text{deg}_{H'}(u)+\text{deg}_{H'}(v) \geq 2n^2/3-\varepsilon n^2 -2|W_2|n > 2n^2/3-\eta_2 n^2.
\]
Since $\eta_2\ll 1$, we may apply Theorem \ref{theoremb6} and conclude that either $H'$ is a subgraph of $H_{3s,s}^2$ or $H'$ contains a perfect matching. In the former case, there is a partition of $V(H')$ into two sets $|T|=2s-1$ and $|S|=s+1$ such that for every vertex $u\in S$,
\[
\deg_{H'}(u)\le \binom{|T|}{2} = \binom{2s-1}{2} \le \binom{2n/3-1}{2} < \frac29 n^2.
\]
On the other hand, since $U\subseteq V(H')$ and $|U|\ge 2s$, there exists a vertex $u\in U\cap S$ such that
\begin{align*}
\deg_{H'}(u) &\geq \text{deg}_{H}(u)-|W_2|n \geq  sn-\frac{\varepsilon}{2} n^2-|W_2|n \\
	&\geq \left(\frac{n}{3}-\eta_1n \right)n-\frac{\varepsilon}{2} n^2- 3\eta_1 n^2 > \frac29 n^2,
\end{align*}
which is a contradiction.
Therefore $H'$ must contain a perfect matching, which is a matching of size $s$ in $H$.

\vskip.2cm

{\noindent \bf Case 2.} $|M_1| > 3s-|U|$.

The difference from Case 1 is that, for any edge $e \in M$, we cannot find two disjoint edges $e_1,e_2$ from $ e\cup U_3 \cup W_2$ -- otherwise we can replace $M$  by $M\setminus \{e\} \cup \{e_1,e_2\}$ contradicting the assumption that $M$ is an optimal matching.

Note that $|U_3|=|U|+|M_1|-3|M| \geq 3s+1 - 3|M| \geq 4$.
\begin{claim}\label{claim23} For any $e \in M$,
 $\sum_{i=1}^{3}|L_{u_i}(e,U_3 \cup W_2)| \leq 3(|U_3|+|W_2|+2)$.
\end{claim}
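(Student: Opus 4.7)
I plan to prove Claim~\ref{claim23} by the same template used in Claim~\ref{claim19}(i), but exploiting the stronger Case~2 property recorded just before the claim: no two disjoint edges of $H$ lie inside $e\cup U_3\cup W_2$. Write $e=\{u_1',u_2',u_3'\}\in M$ and, for each $i\in\{1,2,3\}$, let $G_i$ be the graph on vertex set $V^*:=(U_3\cup W_2)\cup\{x^*\}$, where $x^*$ is a single auxiliary isolated vertex, with edge set $L_{u_i'}(U_3\cup W_2)$. Since $|U_3|\ge 4$, we have $|V^*|\ge 5$, which meets the size hypothesis of Lemma~\ref{lemma3}.

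The crux of the argument, and the one nontrivial step, is to check that for each pair $i\neq j$, every edge of $G_i$ intersects every edge of $G_j$. Suppose instead that $\{v,w\}\in E(G_i)$ and $\{v',w'\}\in E(G_j)$ are vertex-disjoint. Then $\{u_i',v,w\}$ and $\{u_j',v',w'\}$ both lie in $E(H)$, and they are pairwise disjoint because $u_i'\neq u_j'$, $\{v,w\}\cap\{v',w'\}=\emptyset$, and $e\cap(U_3\cup W_2)=\emptyset$. Since $u_i',u_j'\in e$ and $v,w,v',w'\in U_3\cup W_2$, both edges sit inside $e\cup U_3\cup W_2$, contradicting the Case~2 property.

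Given the intersection property, Lemma~\ref{lemma3} applied to $G_1,G_2,G_3$ with $A=\{u_1,u_2,u_3\}\subset U_3\subset V^*$ yields
\[
\sum_{i=1}^{3}\sum_{u_j\in A}\deg_{G_i}(u_j)\;\le\;3(|V^*|+1)\;=\;3(|U_3|+|W_2|+2).
\]
To finish, I regroup the link counts by the $e$-coordinate: since $e\cap(U_3\cup W_2)=\emptyset$, every triple contributing to $L_{u_j}(e,U_3\cup W_2)$ has the form $\{u_j,u_i',w\}$ with $i\in\{1,2,3\}$ and $w\in(U_3\cup W_2)\setminus\{u_j\}$, hence $|L_{u_j}(e,U_3\cup W_2)|=\sum_{i=1}^{3}\deg_{G_i}(u_j)$. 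Summing over $j=1,2,3$ identifies the two double sums and yields the desired bound $\sum_{j=1}^{3}|L_{u_j}(e,U_3\cup W_2)|\le 3(|U_3|+|W_2|+2)$. The only substantive obstacle is verifying the mutual-intersection hypothesis via the Case~2 property; the rest is a direct translation of link counts into the degree-sum language of Lemma~\ref{lemma3}, parallel to Claim~\ref{claim19}(i).
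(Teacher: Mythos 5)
Your proof is correct and takes essentially the same route as the paper: both define $G_i$ as the link graph $L_{u_i'}(U_3\cup W_2)$ on $U_3\cup W_2$ together with one auxiliary isolated vertex, verify the pairwise intersection hypothesis from the Case~2 property, and invoke Lemma~\ref{lemma3}. The only difference is that you spell out the disjoint-edges contradiction and the regrouping identity $|L_{u_j}(e,U_3\cup W_2)|=\sum_{i=1}^{3}\deg_{G_i}(u_j)$ explicitly, which the paper leaves implicit.
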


\begin{proof} Assume $e =\{u_1',u_2',u_3'\} \in M$. For  $i=1,2,3$, let $G_i$ be the graph obtained from $L_{u_i'}(U_3\cup W_2)$ after adding an isolated vertex $u^*$. Then $|V(G_i)| = |U_3|+|W_2|+1 \geq 5$. Since $H$ contains no two disjoint edges $e_1,e_2$ from $ e\cup U_3 \cup W_2$,  we know that for any $i\ne j$, every edge of $G_i$ intersects every edge of $G_j$. The desired inequality thus follows from Lemma \ref{lemma3}.
\end{proof}

\medskip




By Claims \ref{claim17} and \ref{claim23}, we obtain that
\begin{align}\label{eq7000}
\sum_{i=1}^{3}\deg(u_i) &\leq 18\binom{|M|}{2}+9|M|+\sum_{i=1}^{3}|L_{u_i}(V(M),U_3 \cup W_2)|\nonumber \\
& \leq 18\binom{|M|}{2}+9|M|+3|M|\left(|U_3|+|W_2|+2\right)\nonumber\\
& = (3n+6)|M| \le 3sn + 6s.
\end{align}

Let $W'=\{v \in W: \deg(v) \leq sn- s^2/2+\gamma' n^2\}$. If $|W'| \leq \gamma n$,  then we let $H': = H[V\setminus W' ]$.
By the definition of $W'$, $\deg_{H}(u) > sn- s^2/2+\gamma' n^2$ for every $u \in V(H')\cap W$. For any $u \in V(H')\cap U$, $\deg_{H}(u) >sn- \varepsilon n^2/2> sn- s^2/2 +\gamma' n^2$  because $s> \tau n$ and $\varepsilon \ll \gamma' \ll \tau$. Therefore every vertex $u\in V(H')$ satisfies
\[\deg_{H'}(u) \geq \deg_H(u)-n|W'| >  sn-\frac{s^2}{2} +\gamma' n^2-\gamma n^2 >  \binom{n-1}{2}-\binom{n-s}{2}+1,
\]
because $|W'| \leq \gamma n$, $\gamma \ll \gamma' $, and $n$ is sufficiently large. By Theorem \ref{Kuhn2}, $H'$ contains a matching of size $s$.

We thus assume that $|W'| > \gamma n$ for the rest of the proof.  If one of $u_1,u_2,u_3$ is adjacent to a vertex of $W'$, then
\[
\sum_{i=1}^{3}\deg(u_i) > 4\left(sn-\frac{\varepsilon}{2} n^2\right) -\left(sn-\frac{s^2}{2}+\gamma' n^2\right)=3sn+\frac{s^2}2 -2\varepsilon n^2-\gamma' n^2,
\]
which contradicts  \eqref{eq7000} because $s> \tau n$  is sufficiently large and $\varepsilon \ll \gamma' \ll  \tau$.

If none of $u_1,u_2,u_3$ is adjacent to a vertex of $W'$, then we distinguish the following two subcases.

{\noindent \bf Subcase 2.1.} $|W' \cap W_1| > \gamma n/2$.

Let  $M'=\{e \in M: e \cap W' \neq \emptyset\}$, thus $|M'| > \gamma n/2$.  Since $u_1,u_2,u_3$ are not adjacent to any vertex in $W' \cap W_1$, then for any distinct $e_1$, $e_2$ from $M'$, we have
\begin{align}\label{08}
\sum_{i=1}^3|L_{u_i}(e_1,e_2)| \leq 12.
\end{align}

By Claims \ref{claim17}, \ref{claim23} and \eqref{08}, we have
\begin{align*}
\sum_{i=1}^{3}\deg(u_i) &\leq \left(18\binom{|M|}{2}-6\binom{|M'|}{2}\right)+9|M|+3|M|\left(n-3|M|+2\right) \leq (3n+6)|M|-6\binom{|M'|}{2}.
\end{align*}
Since $|M'| > \gamma n/2$, it follows that
\begin{align*}
\sum_{i=1}^{3}\deg(u_i) & \leq (3n+6)(s-1)-6\binom{\gamma n/2}{2} ,
\end{align*}
which contradicts \eqref{0001} because $s\le n/3$ and $\varepsilon \ll \gamma$.

{\noindent \bf Subcase 2.2.} $|W' \cap W_1| \leq \gamma n/2$.

Since $|W'| > \gamma n$, we have $|W'  \cap W_2| > \gamma n/2$.  Let $W_2^* = W_2\setminus W'$. Then $W_2 \setminus W_2^*=W' \cap W_2$. By Claim \ref{claim23}, we obtain that $\sum_{i=1}^{3}|L_{u_i}(V(M),U_3 \cup W_2^*)| \leq 3|M|\left(|U_3|+|W_2^*|+2\right)$. Therefore,
\begin{align*}
\sum_{i=1}^{3}\deg(u_i) &\leq 18\binom{|M|}{2}+9|M|+\sum_{i=1}^{3}|L_{u_i}(V(M),U_3 \cup W_2^*)|\nonumber \\
& \leq 18\binom{|M|}{2}+9|M|+3|M|\left(|U_3|+|W_2^*|+2\right)\nonumber\\
& = 18\binom{|M|}{2}+9|M|+3|M|\left(|U_3|+|W_2|+2\right)-3|M||W_2\setminus W_2^*|\nonumber\\
& = \left(3n+6-\frac{3}{2}\gamma n\right)|M|,
\end{align*}
which contradicts \eqref{0001} because $|M| \leq s$, $\tau n< s$, and $\varepsilon \ll \gamma \ll \tau$.
This completes the proof of Theorem \ref{the2}.

\end{document}